\definecolor{darkblue}{rgb}{0.0, 0.0, 0.55}
\definecolor{bordeaux}{rgb}{0.34, 0.01, 0.1}
\newtheorem{conjecture}[theorem]{Conjecture}
\def\R{{\mathbb{R}}}
\def\C{{\mathbb{C}}}
\def\N{{\mathbb{N}}}
\def\z{{\mathbf{z}}}
\def\e{{\mathbf{e}}}
\def\y{{\mathbf{y}}}
\def\ba{{\mathbf{a}}}
\def\a{{\boldsymbol{\alpha}}}
\def\b{{\boldsymbol{\beta}}}
\def\g{{\boldsymbol{\gamma}}}
\def\bv{{\boldsymbol{v}}}
\def\RR{{\mathscr{R}}}
\def\M{{\mathbf{M}}}
\def\i{\hbox{\bf{i}}}
\def\rank{\hbox{\rm{rank}}}
\def\Span{\hbox{\rm{span}}}
\crefname{hypothesis}{Hypothesis}{Hypotheses}
\title{A real moment-HSOS hierarchy for complex polynomial optimization with real coefficients\thanks{Submitted to the editors DATE.
\funding{This work was funded by NSFC-12201618, NSFC-12171324, by the PEPS2 FastOPF funded by RTE and the French Agency for mathematics in interaction with industry and society (AMIES), the EPOQCS grant funded by the LabEx CIMI (ANR-11-LABX-0040), the European Union’s Horizon 2020 research and innovation programme under the Marie Sk{\l}odowska-Curie Actions, grant agreement 813211 (POEMA), by the AI Interdisciplinary Institute ANITI funding, through the French ``Investing for the Future PIA3'' program under the Grant agreement n${}^\circ$ ANR-19-PI3A-0004 as well as by the National Research Foundation, Prime Minister’s Office, Singapore under its Campus for Research Excellence and Technological Enterprise (CREATE) programme.}
}}
\author{Jie Wang\thanks{Academy of Mathematics and Systems Science, Chinese Academy of Sciences, Beijing, China
  (\email{wangjie212@amss.ac.cn}, \url{https://wangjie212.github.io/jiewang/})}
\and Victor Magron\thanks{LAAS CNRS \& IMT, Toulouse, France 
   (\email{vmagron@laas.fr})}}
\DeclareMathOperator{\diag}{diag}
\begin{document}

\maketitle

\begin{abstract}
This paper proposes a real moment-HSOS hierarchy for complex polynomial optimization problems with real coefficients. We show that this hierarchy provides the same sequence of lower bounds as the complex analogue, yet is much cheaper to solve. In addition, we prove that global optimality is achieved when the ranks of the moment matrix and certain submatrix equal two in case that a sphere constraint is present, and as a consequence, the complex polynomial optimization problem has either two real optimal solutions or
a pair of conjugate optimal solutions. A simple procedure for extracting a pair of conjugate optimal solutions is given in the latter case. Various numerical examples are presented to demonstrate the efficiency of this new hierarchy, and an application to polyphase code design is also provided.
\end{abstract}

\begin{keywords}
complex polynomial optimization, semidefinite relaxation, moment-HSOS hierarchy, polyphase code design, conjugate invariance
\end{keywords}

\begin{AMS}
  Primary, 90C23; Secondary, 90C22,90C26
\end{AMS}

\section{Introduction}
A complex polynomial optimization problem (CPOP) takes the following general form:
\begin{equation}\label{cpop}
f_{\min}\coloneqq\begin{cases}
\inf\limits_{\z\in\C^n} &f(\z,\overline{\z})\\
\,\,\,\rm{s.t.}&g_i(\z,\overline{\z})\ge0,\quad i=1,\ldots,m,\\
&h_i(\z,\overline{\z})=0,\quad i=1,\ldots,l,
\end{cases} \tag{CPOP}
\end{equation}
where $\overline{\z}\coloneqq(\overline{z}_1,\ldots,\overline{z}_n)$ stands for the conjugate of complex variables $\z\coloneqq(z_1,\ldots,z_n)$, and $f,g_1,\ldots,g_m,h_1,\ldots,h_l$ are polynomials in $\z,\overline{\z}$ that are invariant under conjugation (hence take real values).
\eqref{cpop} arises naturally from diverse areas, such as imaging science \cite{fogel2016phase}, signal processing \cite{aittomaki2009,aubry2013ambiguity,dumitrescu2007positive,mariere2003}, automatic control \cite{toker1998complexity}, quantum mechanics \cite{hilling2010}, power systems \cite{bienstock2020}.
Note that by introducing real variables for the real and imaginary parts of each complex variable, respectively, \eqref{cpop} can be converted into a polynomial optimization problem (POP) involving only real variables.

Nowadays a popular scheme to globally solve POPs is the so-called moment-sum-of-squares (moment-SOS) hierarchy (also known as Lasserre's hierarchy) \cite{Las01} which consists of a sequence of increasingly tight semidefinite relaxations. In a similar spirit, the complex moment-Hermitian-sum-of-squares (moment-HSOS) hierarchy involving positive semidefinite Hermitian matrices has also been established to handle CPOPs \cite{josz2018lasserre}. We remark that to solve a CPOP, one can either apply the real moment-SOS hierarchy after converting it into an equivalent real POP or directly apply the complex moment-HSOS hierarchy. It is known that the complex moment-HSOS hierarchy may produce weaker bounds than the real moment-SOS hierarchy at the same relaxation order, which is, however, still of interest because of its lower computational complexity \cite{josz2018lasserre,wang2022exploiting}.

A common drawback of the real moment-SOS hierarchy and the complex moment-HSOS hierarchy is that the size of the related semidefinite relaxations grows rapidly with both the number of polynomial variables and the relaxation order, which makes them hardly applicable to large-scale problems.
In order to improve scalability, it is crucial to exploit structures encoded in problem data to develop structured hierarchies. Such structures include correlative sparsity \cite{josz2018lasserre,waki}, term sparsity \cite{tssos2,tssos1,tssos3,wang2022exploiting}, symmetry \cite{riener2013exploiting}.
Structured hierarchies have been successfully employed to tackle lots of practical applications, for instance, estimating roundoff errors of computer arithmetic \cite{toms17}, bounding joint spectral radius \cite{wang2020sparsejsr}, computing optimal power flow \cite{wang2022certifying}, quantum optimal control \cite{bondar2022quantum}, neural network verification \cite{newton2022sparse}, just to name a few. 

In this paper, we are concerned with a particular subclass of \eqref{cpop}, where the polynomials $f,g_1,\ldots,g_m,h_1,\ldots,h_l$ all have real coefficients, or equivalently, the polynomials $f,g_1,\ldots,g_m,h_1,\ldots,h_l$ are invariant under the conjugate transformation: $\z\mapsto\overline{\z}$. Many complex polynomial optimization problems involving norms (e.g., the problem of polyphase code design) belong to this class (see Sections \ref{expe} and \ref{app}).
For this type of \eqref{cpop}, we propose a symmetry-adapted moment-HSOS hierarchy that involves positive semidefinite {\bf real} matrices. We show that, in comparison to the complex moment-HSOS hierarchy, this new hierarchy (1) has {\bf lower complexity} (see Table \ref{tab:comp}), and (2) provides the {\bf same bound} at each relaxation order (Theorem \ref{thm3}). Essentially, this result is an extension of Riener et al.'s work \cite{riener2013exploiting} to complex polynomial optimization with conjugate symmetry. Furthermore, as another contribution of this paper, we prove that global optimality of the new hierarchy is attained when the ranks of the moment matrix and certain submatrix equal two in the presence of a sphere constraint. Note that here we do not require the ``joint hyponormality'' condition to declare global optimality (cf. \cite[Proposition 4.1]{josz2018lasserre}). The rank-two condition also implies that the related complex polynomial optimization problem has either two real optimal solutions or a pair of conjugate optimal solutions. In the latter case, we provide a simple procedure for extracting a pair of conjugate global minimizers from the moment matrix of order one.

The efficiency of the new hierarchy is demonstrated on randomly generated CPOP instances as well as two CPOPs (respectively related to Smale’s Mean Value conjecture and the Mordell inequality conjecture) from the literature. Finally, as an application, we apply the new hierarchy to the important problem of polyphase code design arising from signal processing. 

\section{Notation and preliminaries}\label{preliminaries}
Let $\N$ be the set of nonnegative integers.
For $n\in\N\setminus\{0\}$, let $[n]\coloneqq\{1,2,\ldots,n\}$. For $\a=(\alpha_i)\in\N^n$, let $|\a|\coloneqq\sum_{i=1}^n\alpha_i$. For $r\in\N$, let $\N^n_r\coloneqq\{\a\in\N^n\mid|\a|\le r\}$. We use $A\succeq0$ to indicate that the matrix $A$ is positive semidefinite (PSD).
Let $\i$ be the imaginary unit, satisfying $\i^2 = -1$.
Let $\z=(z_1,\ldots,z_n)$ be a tuple of complex variables and $\overline{\z}=(\overline{z}_1,\ldots,\overline{z}_n)$ its conjugate. Let $\overline{a}$ denote the conjugate of a complex number $a$ and $\bv^{*}$ (resp. $A^{*}$) denote the conjugate transpose of a complex vector $\bv$ (resp. a complex matrix $A$).
We denote by $\C[\z,\overline{\z}]\coloneqq\C[z_1,\ldots,z_n,\overline{z}_1,\ldots,\overline{z}_n]$, $\R[\z,\overline{\z}]\coloneqq\R[z_1,\ldots,z_n,\overline{z}_1,\ldots,\overline{z}_n]$, the complex and real polynomial rings in $\z,\overline{\z}$, respectively. A polynomial $f\in\C[\z,\overline{\z}]$ (resp. $\R[\z,\overline{\z}]$) can be written as $f=\sum_{(\b,\g)\in\N^n\times\N^n}f_{\b,\g}\z^{\b}\overline{\z}^{\g}$ with $f_{\b,\g}\in\C$ (resp. $f_{\b,\g}\in\R$), $\z^{\b}=z_1^{\beta_1}\cdots z_n^{\beta_n},\overline{\z}^{\g}=\overline{z}_1^{\gamma_1}\cdots \overline{z}_n^{\gamma_n}$. 
The \emph{conjugate} of $f$ is $\overline{f}=\sum_{(\b,\g)\in\N^n\times\N^n}\overline{f}_{\b,\g}\z^{\g}\overline{\z}^{\b}$. The polynomial $f$ is \emph{self-conjugate} if $\overline{f}=f$. It is clear that self-conjugate polynomials take only real values. 
The set of self-conjugate polynomials in $\C[\z,\overline{\z}]$ (resp. $\R[\z,\overline{\z}]$) is denoted by $\C[\z,\overline{\z}]^{\rm{c}}$ (resp. $\R[\z,\overline{\z}]^{\rm{c}}$). For example, we have $3-|z_1|^2+\frac{1+\i}{2}z_1\overline{z}_2^2+\frac{1-\i}{2}z_2^2\overline{z}_1\in\C[\z,\overline{\z}]^{\rm{c}}$ and $3-|z_1|^2+\frac{1}{2}z_1\overline{z}_2^2+\frac{1}{2}z_2^2\overline{z}_1\in\R[\z,\overline{\z}]^{\rm{c}}$. For $r\in\N$, let us denote
\begin{equation*}
    \C_r[\z,\overline{\z}]\coloneqq\left\{\sum_{|\b|,|\g|\le r}f_{\b,\g}\z^{\b}\overline{\z}^{\g}\middle| f_{\b,\g}\in\C\right\},
\end{equation*}
and
\begin{equation*}
    \R_r[\z,\overline{\z}]\coloneqq\left\{\sum_{|\b|,|\g|\le r}f_{\b,\g}\z^{\b}\overline{\z}^{\g}\middle| f_{\b,\g}\in\R\right\}. 
\end{equation*}
Furthermore, let $\C_r[\z,\overline{\z}]^{\rm{c}}\coloneqq\C[\z,\overline{\z}]^{\rm{c}}\cap\C_r[\z,\overline{\z}]$ and $\R_r[\z,\overline{\z}]^{\rm{c}}\coloneqq\R[\z,\overline{\z}]^{\rm{c}}\cap\R_r[\z,\overline{\z}]$.

A polynomial $\sigma=\sum_{(\b,\g)}\sigma_{\b,\g}\z^{\b}\overline{\z}^{\g}\in\C[\z,\overline{\z}]^{\rm{c}}$ is called an \emph{Hermitian sum of squares} (HSOS) if there exist polynomials $f_i\in\C[\z], i\in[t]$ such that $\sigma=|f_1|^2+\cdots+|f_t|^2$. The set of HSOS polynomials is denoted by $\Sigma^{\C}[\z,\overline{\z}]$. For $\sigma\in\R[\z,\overline{\z}]^{\rm{c}}$, one can further require that there exist polynomials $f_i\in\R[\z], i\in[t]$ such that $\sigma=|f_1|^2+\cdots+|f_t|^2$, and the set of such $\sigma$ is denoted by $\Sigma^{\R}[\z,\overline{\z}]$. We also denote $\Sigma_r^{\C}[\z,\overline{\z}]\coloneqq\Sigma^{\C}[\z,\overline{\z}]\cap\C_r[\z,\overline{\z}]$ and $\Sigma_r^{\R}[\z,\overline{\z}]\coloneqq\Sigma^{\R}[\z,\overline{\z}]\cap\R_r[\z,\overline{\z}]$.



\subsection{The complex moment-HSOS hierarchy}\label{complexsos}
By invoking Borel measures, \eqref{cpop} admits the following reformulation:
\begin{equation}\label{meas}
\begin{cases}
\inf\limits_{\mu\in\mathcal{M}_+(K)} &\int_{K} f\,\mathrm{d}\mu\\
\quad\,\,\rm{s.t.}&\int_{K}\,\mathrm{d}\mu=1,\\
\end{cases}
\end{equation}
where 
\begin{equation}
K\coloneqq\left\{\z\in\C^n\mid g_i(\z,\overline{\z})\ge0,i\in[m],\,h_i(\z,\overline{\z})=0,i\in[l]\right\},    
\end{equation}
and $\mathcal{M}_+(K)$ denotes the set of finite positive Borel measures on $K$. This reformulation leads to a complex moment hierarchy for \eqref{cpop}. To this end, let $\y=(y_{\b,\g})_{(\b,\g)\in\N^n\times\N^n}\subseteq\C$ be a pseudo-moment sequence indexed by $(\b,\g)\in\N^n\times\N^n$ satisfying $y_{\b,\g}=\overline{y}_{\g,\b}$. Let $L_{\y}:\C[\z,\overline{\z}]\rightarrow\R$ be the linear Riesz functional associated with $\y$ given by
\begin{equation*}
f=\sum_{(\b,\g)}f_{\b,\g}\z^{\b}\overline{\z}^{\g}\mapsto L_{\y}(f)=\sum_{(\b,\g)}f_{\b,\g}y_{\b,\g}.
\end{equation*}
For $r\in\N$, the \emph{(complex) moment} matrix $\M_{r}(\y)$ (of order $r$) associated with $\y$ is the matrix indexed by $\N^n_{r}$ such that
\begin{equation*}
[\M_r(\y)]_{\b\g}\coloneqq L_{\y}(\z^{\b}\overline{\z}^{\g})=y_{\b,\g}, \quad\forall\b,\g\in\N^n_{r}.
\end{equation*}
Let $g=\sum_{(\b',\g')}g_{\b',\g'}\z^{\b'}\overline{\z}^{\g'}\in\C[\z,\overline{\z}]^{\rm{c}}$. The \emph{(complex) localizing} matrix $\M_{r}(g\y)$ (of order $r$) associated with $g$ and $\y$ is the matrix indexed by $\N^n_{r}$ such that
\begin{equation*}
[\M_{r}(g\y)]_{\b\g}\coloneqq L_{\y}(g\z^{\b}\overline{\z}^{\g})=\sum_{(\b',\g')}g_{\b',\g'}y_{\b+\b',\g+\g'}, \quad\forall\b,\g\in\N^n_{r}.
\end{equation*}
Both the moment and localizing matrices are Hermitian matrices by definition.

Let $d^f\coloneqq\max\,\{|\b|,|\g|: f_{\b,\g}\ne0\}$, $d^g_i\coloneqq\max\,\{|\b|,|\g|: g^i_{\b,\g}\ne0\}$ for $i\in[m]$, $d^h_i\coloneqq\max\,\{|\b|,|\g|: h^i_{\b,\g}\ne0\}$ for $i\in[l]$, where $f=\sum_{(\b,\g)}f_{\b,\g}\z^{\b}\overline{\z}^{\g},g_i=\sum_{(\b,\g)}g^i_{\b,\g}\z^{\b}\overline{\z}^{\g},h_i=\sum_{(\b,\g)}h^i_{\b,\g}\z^{\b}\overline{\z}^{\g}$. Moreover, let
\begin{equation}
   d_{K}\coloneqq\max\left\{2,d^g_1,\ldots,d^g_m,d^h_1,\ldots,d^h_l\right\},
\end{equation}
and $d_{\min}\coloneqq\max\,\{d^f,d^g_1,\ldots,d^g_m,d^h_1,\ldots,d^h_l\}$.
The complex moment hierarchy indexed by $r\ge d_{\min}$ (called the relaxation order) for \eqref{cpop} is then given by
\begin{equation}\label{CMom}
\theta_r\coloneqq\begin{cases}
\inf\limits_{\y\subseteq\C}& L_{\y}(f)\\
\,\,\rm{s.t.}&\M_{r}(\y)\succeq0,\\
&\M_{r-d^g_i}(g_i\y)\succeq0,\quad i\in[m],\\
&\M_{r-d^h_i}(h_i\y)=0,\quad i\in[l],\\
&y_{\mathbf{0},\mathbf{0}}=1,\tag{CMom-$r$}
\end{cases}
\end{equation}
which is a complex semidefinite program (SDP). The dual of \eqref{CMom} can be formulized as follows:
\begin{equation}\label{CSOS}
\hat{\theta}_r\coloneqq\begin{cases}
\sup\limits_{\gamma,\sigma_i,\tau_i}&\gamma\\
\,\,\,\,\rm{s.t.}&f-\gamma=\sigma_0+\sigma_1g_1+\cdots+\sigma_mg_m+\tau_1h_1+\cdots+\tau_lh_l,\\
&\sigma_0\in\Sigma_{r}^{\C}[\z,\overline{\z}],\sigma_i\in\Sigma_{r-d^g_i}^{\C}[\z,\overline{\z}],i\in[m],\,\tau_i\in\C_{r-d^h_i}[\z,\overline{\z}]^{\rm{c}},i\in[l].\tag{CSOS-$r$}
\end{cases}
\end{equation}
To reformulate \eqref{CMom} or \eqref{CSOS} as an SDP over real numbers, we refer the reader to \cite{wang2023efficient}.

\begin{remark}
By the complex version of Putinar's Positivstellensatz theorem due to D’Angelo and Putinar \cite{d2009polynomial}, global asymptotic convergence of the complex moment-HSOS hierarchy is guaranteed when a sphere constraint is present.
\end{remark}

\section{The real moment-HSOS hierarchy}
Our starting point is the following theorem stating that a self-conjugate polynomial with real coefficients is an HSOS if and only if it is also an HSOS with real coefficients.
\begin{theorem}\label{thm1}
Let $f=\sum_{(\b,\g)}f_{\b,\g}\z^{\b}\overline{\z}^{\g}\in\R[\z,\overline{\z}]^{\rm{c}}$. Then it holds
\begin{equation}
    f\in\Sigma_d^{\C}[\z,\overline{\z}]\iff f\in\Sigma_d^{\R}[\z,\overline{\z}].
\end{equation}
\end{theorem}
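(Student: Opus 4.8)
The reverse implication is immediate because $\Sigma_d^{\R}[\z,\overline{\z}]\subseteq\Sigma_d^{\C}[\z,\overline{\z}]$ by definition (a sum of squares of real polynomials is in particular a sum of squares of complex ones), so the plan is entirely about establishing $f\in\Sigma_d^{\C}[\z,\overline{\z}]\Rightarrow f\in\Sigma_d^{\R}[\z,\overline{\z}]$. I would route this through the Gram matrix (equivalently, moment matrix) dictionary for Hermitian sums of squares. Let $\bv_d(\z)\coloneqq(\z^{\b})_{\b\in\N^n_d}$ be the column vector of monomials of degree at most $d$. Starting from a representation $f=\sum_{i=1}^t|f_i|^2$ with each $f_i\in\C[\z]$ of degree at most $d$, the usual argument (collect the coefficient vectors of the $f_i$) produces a positive semidefinite Hermitian matrix $Q$, indexed by $\N^n_d$, with $f=\bv_d(\z)^{*}Q\,\bv_d(\z)$; conversely any such $Q$ gives back an HSOS representation. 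So the first step just records that $f\in\Sigma_d^{\C}[\z,\overline{\z}]$ is equivalent to $f$ admitting a PSD Hermitian Gram matrix $Q$ of order $d$.

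The crux is a structural feature peculiar to the complex setting: the monomials $\{\z^{\b}\overline{\z}^{\g}:\b,\g\in\N^n_d\}$ are linearly independent in $\C[\z,\overline{\z}]$, unlike the real case where $x^{\b}x^{\g}=x^{\b+\g}$ merges distinct index pairs. Expanding $\bv_d(\z)^{*}Q\,\bv_d(\z)=\sum_{\b,\g}Q_{\b\g}\,\z^{\g}\overline{\z}^{\b}$ and comparing with $f=\sum_{\b,\g}f_{\b,\g}\,\z^{\b}\overline{\z}^{\g}$ therefore forces $Q_{\b\g}=f_{\g,\b}$; in particular $Q$ is uniquely determined by $f$. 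Now the hypothesis $f\in\R[\z,\overline{\z}]^{\rm{c}}$ enters in the cleanest possible way: all $f_{\b,\g}$ are real, hence $Q$ is a real matrix, and a real Hermitian matrix is nothing but a real symmetric one. So $Q$ is real symmetric and positive semidefinite.

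The last step is routine linear algebra: factor $Q=\sum_{i}\br_i\br_i^{T}$ with $\br_i\in\R^{|\N^n_d|}$ (e.g.\ from the spectral decomposition with nonnegative eigenvalues, or a Cholesky factorization). Then
\begin{equation*}
f=\bv_d(\z)^{*}Q\,\bv_d(\z)=\sum_i\bigl(\bv_d(\z)^{*}\br_i\bigr)\bigl(\br_i^{T}\bv_d(\z)\bigr)=\sum_i|p_i|^{2},\qquad p_i\coloneqq\br_i^{T}\bv_d(\z),
\end{equation*}
and since each $\br_i$ is real, $p_i\in\R[\z]$ has degree at most $d$, so $f\in\Sigma_d^{\R}[\z,\overline{\z}]$. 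I do not anticipate a real obstacle; the only delicate point is keeping the conjugations straight and pinning down exactly why the coefficients of $f$ being real forces the (unique) Gram matrix to be real rather than merely Hermitian.

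A second, more symmetry-flavored route avoids Gram matrices altogether, and may be the one that generalizes better in the rest of the paper. Let $\phi$ be the $\C$-algebra involution of $\C[\z,\overline{\z}]$ that exchanges $z_i$ with $\overline{z}_i$ while fixing coefficients; then $f\in\R[\z,\overline{\z}]^{\rm{c}}$ says precisely $\phi(f)=f$. One checks that $\phi\bigl(\sum_i|f_i|^{2}\bigr)=\sum_i|\widetilde f_i|^{2}$, where $\widetilde f_i\in\C[\z]$ is obtained from $f_i$ by conjugating its coefficients, so $f=\phi(f)=\sum_i|\widetilde f_i|^{2}$ as well; averaging the two representations gives $f=\tfrac12\sum_i\bigl(|f_i|^{2}+|\widetilde f_i|^{2}\bigr)$. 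Finally, writing $f_i=p_i+\i q_i$ with $p_i,q_i\in\R[\z]$ of degree at most $d$, the elementary identity $|f_i|^{2}+|\widetilde f_i|^{2}=2\bigl(|p_i|^{2}+|q_i|^{2}\bigr)$ exhibits $f$ as a sum of squares of real polynomials, i.e.\ $f\in\Sigma_d^{\R}[\z,\overline{\z}]$.
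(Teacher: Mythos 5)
Your proposal is correct, and your first route is essentially the paper's own argument: the paper takes the real symmetric matrix $G$ with $G_{\b\g}=f_{\b,\g}$ so that $f=[\z]_d^{*}G[\z]_d$, deduces $G=F^{*}F$ from a factorization $f=(F[\z]_d)^{*}(F[\z]_d)$ (an identification that silently uses the same uniqueness of the complex Gram matrix, i.e.\ linear independence of the monomials $\z^{\b}\overline{\z}^{\g}$, which you make explicit), and then verifies $\ba^{\intercal}G\ba=(F\ba)^{*}(F\ba)\ge0$ for real $\ba$ --- exactly your observation that the unique Gram matrix is real, symmetric and PSD, hence factors through real vectors. Your second route is genuinely different and equally valid: it bypasses Gram matrices altogether by applying the coefficient-fixing involution $\phi$ to an HSOS decomposition, averaging $f=\sum_i|f_i|^{2}$ with $f=\sum_i|\widetilde f_i|^{2}$, and invoking the identity $|f_i|^{2}+|\widetilde f_i|^{2}=2\left(|p_i|^{2}+|q_i|^{2}\right)$ for $f_i=p_i+\i q_i$; this is more elementary, keeps the degree bound automatically, and is close in spirit to the symmetrization $\frac{1}{2}\left(\sigma(\z,\overline{\z})+\sigma(\overline{\z},\z)\right)$ that the paper uses in Theorem \ref{thm2}, so it would in fact let you handle Theorems \ref{thm1} and \ref{thm2} in one stroke. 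Two small caveats, neither a gap: $\phi(f)=f$ is not \emph{precisely} equivalent to $f\in\R[\z,\overline{\z}]^{\rm{c}}$ (e.g.\ $\i\, z_1\overline{z}_1$ is fixed by $\phi$ but is neither self-conjugate nor real-coefficient); you only need, and correctly use, the implication from the hypothesis. Also, both of your routes, like the paper, take for granted the standard fact that $f\in\Sigma_d^{\C}[\z,\overline{\z}]$ admits a representation with every $f_i$ of degree at most $d$ (top-degree blocks cannot cancel in an HSOS), which matches the paper's level of detail.
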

\begin{proof}
We shall prove only the direction ``$\Rightarrow$'' as the opposite direction is trivial. Let $[\z]_d=(\z^{\a})_{|\a|\le d}$ be the standard complex monomial basis. Let $G$ be the symmetric real matrix indexed by $|\a|\le d$ satisfying $G_{\b,\g}=f_{\b,\g}$ such that $f=[\z]_d^*G[\z]_d$. Since $f\in\Sigma_d^{\C}[\z,\overline{\z}]$, there exists $F\in\C^{s\times t}$ ($t\coloneqq\binom{n+d}{d}$) such that $f=(F[\z]_d)^*(F[\z]_d)$. It follows $G=F^*F$. Let $\ba\in\R^{t}$ be arbitrary. We have $\ba^{\intercal}G\ba=\ba^{*}F^*F\ba=(F\ba)^*(F\ba)\ge0$. Hence $G$ is PSD and so $f\in\Sigma_d^{\R}[\z,\overline{\z}]$.
\end{proof}

Theorem \ref{thm1} can be further extended to Putinar type representations of positive polynomials on a semialgebraic set.
\begin{theorem}\label{thm2}
Let $f,g_1,\ldots,g_m,h_1,\ldots,h_l\in\R[\z,\overline{\z}]^{\rm{c}}$. Assume that $f$ admits a representation
\begin{equation}
    f=\sigma_0+\sum_{i=1}^m\sigma_i g_i+\sum_{i=1}^l\tau_i h_i,
\end{equation}
where $\sigma_i\in\Sigma_{s_i}^{\C}[\z,\overline{\z}],i\in\{0\}\cup[m]$ and $\tau_i\in\C_{t_i}[\z,\overline{\z}]^{\rm{c}},i\in[l]$. Then there exist $\sigma'_i\in\Sigma^{\R}_{s_i}[\z,\overline{\z}],i\in\{0\}\cup[m]$ and $\tau'_i\in\R_{t_i}[\z,\overline{\z}]^{\rm{c}},i\in[l]$ such that
\begin{equation}
    f=\sigma'_0+\sum_{i=1}^m\sigma'_i g_i+\sum_{i=1}^l\tau'_i h_i.
\end{equation}
\end{theorem}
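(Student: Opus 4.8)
The plan is to reduce the statement to Theorem~\ref{thm1} by symmetrizing the given complex representation under conjugation. The key observation is that the polynomials $f, g_1,\ldots,g_m,h_1,\ldots,h_l$ all have \emph{real} coefficients, i.e.\ are invariant under the involution $\z\mapsto\overline{\z}$ (equivalently, the coefficient array satisfies $f_{\b,\g}\in\R$). For any polynomial $p\in\C[\z,\overline{\z}]$, write $p^{\star}(\z,\overline{\z})\coloneqq \overline{p(\overline{\z},\z)}$ for the polynomial obtained by conjugating all coefficients; this is an $\R$-linear ring involution fixing exactly $\R[\z,\overline{\z}]^{\rm c}$, it maps $\Sigma^{\C}_{s}[\z,\overline{\z}]$ into itself (since $(|f|^2)^{\star}=|f^{\star}|^2$ and degrees are preserved), and it maps $\C_{t}[\z,\overline{\z}]^{\rm c}$ into itself. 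Applying $\star$ to the given identity $f=\sigma_0+\sum_i\sigma_i g_i+\sum_i\tau_i h_i$ and using that $f,g_i,h_i$ are $\star$-fixed yields $f=\sigma_0^{\star}+\sum_i\sigma_i^{\star} g_i+\sum_i\tau_i^{\star} h_i$. Averaging the two identities gives
\begin{equation*}
f=\tfrac12(\sigma_0+\sigma_0^{\star})+\sum_{i=1}^m\tfrac12(\sigma_i+\sigma_i^{\star})\,g_i+\sum_{i=1}^l\tfrac12(\tau_i+\tau_i^{\star})\,h_i,
\end{equation*}
in which every coefficient block is now $\star$-invariant, hence has real coefficients: $\tfrac12(\sigma_i+\sigma_i^{\star})\in\Sigma^{\C}_{s_i}[\z,\overline{\z}]\cap\R[\z,\overline{\z}]^{\rm c}$ for $i\in\{0\}\cup[m]$, and $\tfrac12(\tau_i+\tau_i^{\star})\in\R_{t_i}[\z,\overline{\z}]^{\rm c}$ for $i\in[l]$.

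It remains only to upgrade each real-coefficient Hermitian SOS block to a \emph{real} Hermitian SOS, i.e.\ one of the form $\sum_j|f_j|^2$ with $f_j\in\R[\z]$. But $\tfrac12(\sigma_i+\sigma_i^{\star})$ is a self-conjugate polynomial with real coefficients that lies in $\Sigma^{\C}_{s_i}[\z,\overline{\z}]$, so Theorem~\ref{thm1} applies verbatim and gives $\tfrac12(\sigma_i+\sigma_i^{\star})\in\Sigma^{\R}_{s_i}[\z,\overline{\z}]$. Setting $\sigma'_i\coloneqq\tfrac12(\sigma_i+\sigma_i^{\star})$ and $\tau'_i\coloneqq\tfrac12(\tau_i+\tau_i^{\star})$ completes the proof.

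The only points requiring care are bookkeeping rather than substance: one must check that $\star$ genuinely preserves the degree truncations $\Sigma^{\C}_{s}$ and $\C_{t}[\z,\overline{\z}]^{\rm c}$ (immediate, since $p\mapsto p^{\star}$ swaps the roles of $\b$ and $\g$ in each monomial $\z^{\b}\overline{\z}^{\g}$ and thus preserves $\max\{|\b|,|\g|\}$ as well as the individual bounds $|\b|,|\g|\le s$), that $(|h|^2)^{\star}=|h^{\star}|^2$ so the HSOS cone is $\star$-stable, and that $\star$ is $\R$-linear so the averaging step stays inside each cone. I do not anticipate a genuine obstacle here: the real content is entirely in Theorem~\ref{thm1}, and this theorem is a routine symmetrization on top of it, exactly analogous to how one passes from a Positivstellensatz certificate to a symmetry-adapted one by averaging over the group (here the two-element group generated by conjugation).
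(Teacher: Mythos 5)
Your proposal is correct and follows essentially the same route as the paper: symmetrize the certificate by averaging it with its image under the conjugation involution, observe each averaged block stays in the same cone with real coefficients, and invoke Theorem~\ref{thm1} to convert the HSOS blocks into real HSOS. The paper uses the plain swap $p(\z,\overline{\z})\mapsto p(\overline{\z},\z)$ while you use $p\mapsto\overline{p(\overline{\z},\z)}$ (coefficient conjugation), but these coincide on the self-conjugate blocks $\sigma_i,\tau_i$, so the resulting certificate is identical; the only tiny slip is that your $\star$ fixes all of $\R[\z,\overline{\z}]$, not just $\R[\z,\overline{\z}]^{\rm c}$, which does not affect the argument.
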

\begin{proof}
Since $f,g_1,\ldots,g_m,h_1,\ldots,h_l\in\R[\z,\overline{\z}]^{\rm{c}}$, we have
\begin{align*}
f(\z,\overline{\z})&=\frac{1}{2}\left(f(\z,\overline{\z})+f(\overline{\z},\z)\right)&\\
&=\frac{1}{2}\left(\sigma_0(\z,\overline{\z})+\sigma_0(\overline{\z},\z)\right)+\sum_{i=1}^m\frac{1}{2}\left(\sigma_i(\z,\overline{\z})+\sigma_i(\overline{\z},\z)) g_i(\z,\overline{\z}\right)\\
&\quad\quad+\sum_{i=1}^l\frac{1}{2}\left(\tau_i(\z,\overline{\z})+\tau_i(\overline{\z},\z)) h_i(\z,\overline{\z}\right).
\end{align*}
Let $\sigma'_i=\frac{1}{2}(\sigma_i(\z,\overline{\z})+\sigma_i(\overline{\z},\z))\in\Sigma_{s_i}^{\C}[\z,\overline{\z}]$ for $i\in\{0\}\cup[m]$ and $\tau'_i=\frac{1}{2}(\tau_i(\z,\overline{\z})+\tau_i(\overline{\z},\z))\in\C_{t_i}[\z,\overline{\z}]^{\rm{c}}$ for $i\in[l]$. One can easily check that $\sigma'_i,\tau'_i\in\R[\z,\overline{\z}]^{\rm{c}}$ for all $i$. Thus by Theorem \ref{thm1}, we have $\sigma'_i\in\Sigma^{\R}_{s_i}[\z,\overline{\z}],i\in\{0\}\cup[m]$ and $\tau'_i\in\R_{t_i}[\z,\overline{\z}]^{\rm{c}},i\in[l]$, which completes the proof.
\end{proof}

Theorem \ref{thm2} allows us to consider the following alternative to the HSOS relaxation \eqref{CSOS} of \eqref{cpop}:
\begin{equation}\label{RSOS}
\hat{\eta}_r\coloneqq\begin{cases}
\sup\limits_{\gamma,\sigma_i,\tau_i}&\gamma\\
\,\,\,\,\rm{s.t.}&f-\gamma=\sigma_0+\sigma_1g_1+\cdots+\sigma_mg_m+\tau_1h_1+\cdots+\tau_lh_l,\\
&\sigma_0\in\Sigma_r^{\R}[\z,\overline{\z}],\sigma_i\in\Sigma_{r-d^g_i}^{\R}[\z,\overline{\z}],i\in[m],\,\tau_i\in\R_{r-d^h_i}[\z,\overline{\z}]^{\rm{c}},i\in[l].\tag{RSOS-$r$}
\end{cases}
\end{equation}

On the other hand, the assumption that the coefficients of $f,g_1,\ldots,g_m,h_1,\ldots,h_l$ are all real implies that \eqref{cpop} is invariant under the conjugate transformation: $\z\mapsto\overline{\z}$. Therefore, in \eqref{meas} it is more natural to seek a Borel measure that is invariant under the conjugate transformation. A conjugate-invariant Borel measure has real moments. Hence we can consider the following moment relaxation for \eqref{cpop} defined over real numbers:

\begin{equation}\label{RMom}
\eta_r\coloneqq\begin{cases}
\inf\limits_{\y\subseteq\R}& L_{\y}(f)\\
\,\,\rm{s.t.}&\M_{r}(\y)\succeq0,\\
&\M_{r-d^g_i}(g_i\y)\succeq0,\quad i\in[m],\\
&\M_{r-d^h_i}(h_i\y)=0,\quad i\in[l],\\
&y_{\mathbf{0},\mathbf{0}}=1.
\end{cases}\tag{RMom-$r$}
\end{equation}
\eqref{RMom} and \eqref{RSOS} form a pair of primal-dual SDPs.

\begin{theorem}\label{thm3}
For any $r\ge d_{\min}$, one has $\theta_r=\eta_r$ and $\hat{\theta}_r=\hat{\eta}_r$.
\end{theorem}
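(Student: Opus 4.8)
The plan is to establish each of the two equalities by proving the two inequalities separately; in every case the ``$\le$'' direction is a trivial inclusion of feasible sets, and the ``$\ge$'' direction exploits the conjugate symmetry of the data. On the dual (SOS) side this is exactly Theorem \ref{thm2}; on the primal (moment) side it is a symmetrization of pseudo-moments. The two arguments are dual to one another, both powered by the hypothesis that $f,g_1,\ldots,g_m,h_1,\ldots,h_l$ have real coefficients.

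First, for $\hat\theta_r=\hat\eta_r$. Since $\Sigma_d^{\R}[\z,\overline{\z}]\subseteq\Sigma_d^{\C}[\z,\overline{\z}]$ and $\R_d[\z,\overline{\z}]^{\rm c}\subseteq\C_d[\z,\overline{\z}]^{\rm c}$, any triple feasible for \eqref{RSOS} is feasible for \eqref{CSOS}, so $\hat\eta_r\le\hat\theta_r$. Conversely, take $(\gamma,\sigma_i,\tau_i)$ feasible for \eqref{CSOS}. Then $f-\gamma\in\R[\z,\overline{\z}]^{\rm c}$, and applying Theorem \ref{thm2} to it with $s_0=r$, $s_i=r-d^g_i$, $t_i=r-d^h_i$ produces $\sigma_i'\in\Sigma_{s_i}^{\R}[\z,\overline{\z}]$ and $\tau_i'\in\R_{t_i}[\z,\overline{\z}]^{\rm c}$ realizing the same representation of $f-\gamma$; hence $(\gamma,\sigma_i',\tau_i')$ is feasible for \eqref{RSOS} and $\hat\theta_r\le\hat\eta_r$.

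Second, for $\theta_r=\eta_r$. A real pseudo-moment sequence feasible for \eqref{RMom} is feasible for \eqref{CMom} with the same objective, because a real symmetric matrix is positive semidefinite in the Hermitian sense if and only if it is positive semidefinite in the real sense (a standard fact, essentially the computation in the proof of Theorem \ref{thm1}); thus $\theta_r\le\eta_r$. For the reverse, let $\y$ be feasible for \eqref{CMom} and define the conjugated sequence $\hat\y$ by $\hat y_{\b,\g}\coloneqq y_{\g,\b}=\overline{y_{\b,\g}}$. I would first verify that $\hat\y$ is again feasible for \eqref{CMom}: since $f$, the $g_i$ and the $h_i$ have real coefficients, one checks $\M_r(\hat\y)=\overline{\M_r(\y)}$, $\M_{r-d^g_i}(g_i\hat\y)=\overline{\M_{r-d^g_i}(g_i\y)}$ and $\M_{r-d^h_i}(h_i\hat\y)=\overline{\M_{r-d^h_i}(h_i\y)}$, and the complex conjugate of a positive semidefinite (resp. zero) Hermitian matrix is again positive semidefinite (resp. zero), while $L_{\hat\y}(f)=L_{\y}(f)$. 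Consequently, by convexity of the positive semidefinite cone, the real sequence $\y'\coloneqq\tfrac12(\y+\hat\y)$, i.e.\ $y_{\b,\g}'=\mathrm{Re}\,y_{\b,\g}$, is feasible for \eqref{RMom} (it inherits $y'_{\b,\g}=y'_{\g,\b}$ from $y_{\g,\b}=\overline{y_{\b,\g}}$) and satisfies $L_{\y'}(f)=L_{\y}(f)$; taking the infimum over feasible $\y$ yields $\eta_r\le\theta_r$.

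I do not expect a genuine obstacle; the delicate points are merely bookkeeping. One is that the symmetrization $\sigma\mapsto\tfrac12(\sigma(\z,\overline{\z})+\sigma(\overline{\z},\z))$ invoked through Theorem \ref{thm2} keeps the bidegree within the same bounds, so the relaxation order $r$ is preserved — this is already implicit in the proof of that theorem. The other is the interplay between the real and Hermitian notions of positive semidefiniteness on real symmetric matrices, which is essentially the content of Theorem \ref{thm1}. It is precisely the real-coefficient assumption that makes both $\hat\y$ feasible and the conjugate-symmetrized SOS certificates real, and hence that closes each pair of inequalities.
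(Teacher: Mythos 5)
Your proposal is correct and follows essentially the same route as the paper: the dual equality $\hat{\theta}_r=\hat{\eta}_r$ is deduced from Theorem \ref{thm2}, and the primal equality $\theta_r=\eta_r$ comes from the trivial inclusion in one direction and the symmetrization $\y\mapsto\frac{1}{2}(\y+\overline{\y})$ (your $\hat\y$ is exactly $\overline{\y}$) in the other. You merely spell out the feasibility of the conjugated sequence and the preservation of degrees more explicitly than the paper does.
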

\begin{proof}
Suppose that $\y$ is a feasible solution of \eqref{RMom}. Clearly $\y$ is also a feasible solution of \eqref{CMom}. So $\theta_r\le\eta_r$. Conversely, suppose that $\y$ is a feasible solution of \eqref{CMom}. Then $\y'=\frac{\y+\overline{\y}}{2}$ is a feasible solution of \eqref{RMom} and $L_{\y'}(f)=\frac{1}{2}(L_{\y}(f)+L_{\overline{\y}}(f))=L_{\y}(f)$. It follows $\theta_r\ge\eta_r$ and we then conclude $\theta_r=\eta_r$.

The equality $\hat{\theta}_r=\hat{\eta}_r$ follows from Theorem \ref{thm2}.
\end{proof}

In Table \ref{tab:comp}, we list the sizes of SDPs involved in the real HSOS hierarchy (R-HSOS), the complex HSOS hierarchy (C-HSOS)\footnote{Here, the size of a complex SDP is measured after converting it to an equivalent real SDP; see \cite{wang2023efficient}.}, and the real SOS hierarchy (R-SOS) (i.e., the usual SOS hierarchy applied to the equivalent real POP) for \eqref{cpop}, respectively.

\begin{table}[htbp]
	\caption{Complexity comparison of different hierarchies for \eqref{cpop}. $n$: number of complex variables, $r$: relaxation order, $n_{\rm{sdp}}$: maximal size of PSD blocks, $m_{\rm{sdp}}$: number of affine constraints.}\label{tab:comp}
	\renewcommand\arraystretch{1.4}
	\centering
	\begin{tabular}{c|c|c|c}
	&R-HSOS&C-HSOS&R-SOS\\
	\hline
	$n_{\rm{sdp}}$&$\binom{n+r}{r}$&$2\binom{n+r}{r}$&$\binom{2n+r}{r}$\\
	\hline
        $m_{\rm{sdp}}$&$\frac{1}{2}\left(\binom{n+r}{r}+1\right)\binom{n+r}{r}$&$\binom{n+r}{r}^2$&$\binom{2n+2r}{2r}$
	\end{tabular}
\end{table}

\begin{remark}
If \eqref{cpop} additionally exhibits correlative and/or term sparsity, then one can construct a sparsity-adapted real moment-HSOS hierarchy in a similar spirit with \cite{wang2022exploiting}.
\end{remark}

\section{Detecting global optimality and extracting solutions}
In general, detecting global optimality and extracting solutions for \eqref{RMom} can be carried out in the same way as for \eqref{CMom}. For details, we refer the reader to Proposition 4.1 of \cite{josz2018lasserre} and the discussions therein. However, we observe that the ``joint hyponormality'' condition in \cite[Proposition 4.1]{josz2018lasserre} could be actually replaced with the following simpler ``hyponormality'' condition.

\begin{corollary}\label{cor:flat}
Consider \eqref{cpop} with complex (not necessarily real) coefficients.
Suppose that \eqref{cpop} involves a sphere constraint $|z_1|^2 + \dots + |z_n|^2 = R$ for some $R > 0$.
Let $\y$ be an optimal solution of \eqref{CMom}.
\begin{itemize}
\item If there is an integer $t$ such that $d_{\min} \leq t \leq r$ and $\rank\,\M_t(\y) = 1$, then  $\theta_r = f_{\min}$ and there is at least one global minimizer.
\item If there exists an integer $t$ such that $\max\,\{d_K, d_{\min}\} \leq t \leq r$ and the following two conditions hold
\begin{enumerate}
\item (Flatness) $\rank\,\M_t(\y) = \rank\,\M_{t-d_K}(\y)\,\,(\eqqcolon s)$;
\item (Hyponormality) For all $i\in [n]$,
$$\begin{bmatrix}
\M_{t-d_K}(\y) & \M_{t-d_K}(\overline{z_i}\y) \\
\M_{t-d_K}(z_i\y) & \M_{t-d_K}(|z_i|^2\y)
\end{bmatrix} \succeq0.$$
\end{enumerate}
Then $\theta_r = f_{\min}$ and there are at least $s$ global minimizers.
\end{itemize}
\end{corollary}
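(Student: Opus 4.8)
The plan is to adapt the classical flat-extension argument of Curto--Fialkow (as used in Lasserre's real hierarchy and in \cite{josz2018lasserre}) to the Hermitian setting, with the key twist that the sphere constraint plus hyponormality lets us replace the full joint hyponormality hypothesis by the single-variable hyponormality condition. First I would treat the rank-one bullet: if $\rank \M_t(\y)=1$ then $\y=\bv\bv^*$ for some vector $\bv$ of evaluations at a point $\z^\star$, which automatically lies in $K$ because all the localizing matrix constraints collapse to $g_i(\z^\star,\overline{\z^\star})\ge 0$ and $h_i(\z^\star,\overline{\z^\star})=0$; then $\theta_r=L_\y(f)=f(\z^\star,\overline{\z^\star})\ge f_{\min}$, and combined with $\theta_r\le f_{\min}$ (weak duality / the relaxation being a relaxation) we get equality and a minimizer. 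This part is routine.

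For the second bullet the heart of the matter is to promote the flat truncated moment sequence to a genuine representing measure supported on $K$. I would proceed in three stages. Stage one: flatness $\rank\M_t(\y)=\rank\M_{t-d_K}(\y)$ gives, by the Hermitian flat-extension theorem, a unique flat extension of the moment matrix to all orders, hence a finitely atomic representing pseudo-measure $\mu=\sum_{j=1}^s \lambda_j \delta_{\z^{(j)}}$ for the $\C[\z,\overline\z]_{2t}$ moments, with $\lambda_j>0$ and $s$ atoms. Stage two: check that each atom $\z^{(j)}$ lies in $K$. Positivity of the localizing matrices $\M_{t-d_i^g}(g_i\y)$ forces $g_i\ge 0$ on the support, and $\M_{t-d_i^h}(h_i\y)=0$ forces $h_i=0$ on the support, exactly as in the real case; this is where the sphere constraint, being one of the $h_i$, already pins the atoms to the sphere. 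Stage three: conclude $\theta_r=\int f\,d\mu=\sum_j\lambda_j f(\z^{(j)},\overline{\z^{(j)}})\ge f_{\min}$ with equality, and each atom where the value is attained is a global minimizer, giving $s$ of them (or at least $s$, counting the distinct atoms).

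The step I expect to be the genuine obstacle --- and the only place the new hypothesis does real work --- is Stage one, specifically justifying the Hermitian flat extension. In the purely real commutative setting flatness of $\M_t$ over $\M_{t-1}$ immediately yields a flat extension to $\M_{t+1}$ and onwards; in the Hermitian bivariate setting the analogue is the Curto--Fialkow theory for the complex moment problem, where flatness alone is \emph{not} enough and an extra hyponormality/subnormality-type condition is required to build the commuting normal extension whose spectral measure represents $\y$. The argument I would give: use flatness to extend $\M_t$ flatly to $\M_{t+1}$ in the variables separately, then invoke the hyponormality inequalities
\[
\begin{bmatrix}
\M_{t-d_K}(\y) & \M_{t-d_K}(\overline{z_i}\y)\\
\M_{t-d_K}(z_i\y) & \M_{t-d_K}(|z_i|^2\y)
\end{bmatrix}\succeq 0
\]
together with the sphere relation $\sum_i \M_{t-d_K}(|z_i|^2\y)=R\,\M_{t-d_K}(\y)$ to show that the multiplication-by-$z_i$ operators induced on the (finite-dimensional, by flatness) quotient space $\C[\z,\overline\z]/\ker$ are in fact commuting normal operators: the sphere identity controls $\sum_i (\|z_i\|^2-\text{hyponormal defect}_i)=0$ with each summand $\ge 0$ by the $2\times 2$ positivity, forcing each defect to vanish, i.e.\ each $M_{z_i}$ is normal, and normal commuting operators admit a joint spectral resolution supported on the common spectrum, which gives the atomic measure. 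I would then remark that this is precisely the mechanism by which single-variable hyponormality upgrades to the joint normality that \cite[Proposition 4.1]{josz2018lasserre} obtained from joint hyponormality, and that no generality is lost because the sphere constraint is present by assumption. The remaining bookkeeping --- that the number of atoms equals $\rank\M_t(\y)=s$, that the extension is consistent with the $h_i=0$ constraints, and that $f$ has degree at most $2t$ so $L_\y(f)=\int f\,d\mu$ --- is standard and I would only sketch it.
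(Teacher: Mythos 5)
Your overall route is the same as the paper's, which simply adapts Josz--Molzahn's extraction theorem: build the shift operators on the finite-dimensional span of the Gram vectors $a_{\a}$, show they commute and are normal, diagonalize them simultaneously, and obtain an $s$-atomic measure whose atoms lie in $K$ by the localizing constraints. However, the one step you yourself flag as ``the genuine obstacle'' is handled incorrectly as written. The claim that the sphere identity controls $\sum_i\bigl(\|z_i\|^2-(T_i^*T_i-T_iT_i^*)\bigr)=0$ with nonnegative summands is not what the sphere constraint gives (it gives $\sum_i T_i^*T_i=R\,I$ on the column space), and in fact no sphere is needed to upgrade hyponormality to normality: on a finite-dimensional space each defect $T_i^*T_i-T_iT_i^*$ is PSD with trace zero (the trace of a commutator vanishes), hence zero. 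This is exactly the observation the paper's proof rests on, after which simultaneous unitary diagonalization of the commuting normal family follows from Horn--Johnson Theorem 2.5.5 together with Fuglede's theorem. Your sphere-based variant can be repaired into this trace argument, but the identity you display is false as stated.

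The more substantive omission is the step where the sphere constraint actually does its work: you take for granted that multiplication by $z_i$ ``induces'' an operator on the quotient/column space, i.e.\ that $\sum_{\a}w_{\a}a_{\a}=0$ implies $\sum_{\a}w_{\a}a_{\a+\e_i}=0$. In the Hermitian setting, where the moment matrix is indexed by holomorphic monomials only, flatness does not obviously give this kernel invariance; both Josz--Molzahn and the paper (see the proof of Theorem \ref{sec4:thm2}) establish it through the sphere relation $\sum_i\bigl\|\sum_{\a}w_{\a}a_{\a+\e_i}\bigr\|^2=R\bigl\|\sum_{\a}w_{\a}a_{\a}\bigr\|^2$. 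Relatedly, the ``Hermitian flat-extension theorem'' invoked in your Stage one does not exist in the form you need (as you concede), so the operator construction \emph{is} the proof and this well-definedness step cannot be skipped. Finally, the rank-one bullet is not quite ``routine'': $\rank\,\M_t(\y)=1$ only gives $y_{\b,\g}=v_{\b}\overline{v}_{\g}$, and since the Hermitian moment matrix has no Hankel-type entry repetitions, multiplicativity of $v$ (so that the entries are genuine evaluations of monomials at a single point $\z^\star$, allowing $L_{\y}(f)=f(\z^\star,\overline{\z^\star})$ and $g_i(\z^\star,\overline{\z^\star})\ge0$) is not automatic; it again comes from the shift construction, which is why the sphere hypothesis is needed even in that bullet. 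With these repairs your argument coincides with the paper's.
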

\begin{proof}
The proof is almost the same as the one of \cite[Proposition 4.1]{josz2018lasserre}, which relies on  \cite[Theorem 5.1]{josz2018lasserre}.
One just needs to note that the condition \emph{(Hyponormality)} implies that the so-called {\em shift} operators are all normal, which together with pairwise commutativity suffices to guarantee that these shift operators (and their adjoint operators) are simultaneously unitarily diagonalizable; see Theorem 2.5.5 of \cite{horn2012matrix} (and Theorem 1 of \cite{fuglede1950commutativity}).
\end{proof}

Next, we concentrate on a particular case when $\rank\,{\M_r(\y)}=2$, which frequently occurs in practice. We will see that the rank-two condition allows us to remove the hyponormality assumption in Corollary \ref{cor:flat}. To begin with, let us prove the following proposition which is crucial for the proof of our main theorem and may also have independent interests \cite{piekarz2023neumann}.
\begin{proposition}\label{sec4:thm1}
Suppose that the matrices $A_1,\ldots,A_n\in\R^{2\times2}$ commute pairwise and the equality
\begin{equation}\label{sec4:eq1}
    A_1^{\intercal}A_1+\cdots+A_n^{\intercal}A_n=RI_2
\end{equation}
holds for some $R>0$. Here, $I_2$ denotes the $2\times2$ identity matrix. Then one of the following two statements holds true:
\begin{enumerate}[(1)]
    \item 
    $A_i=\left[\begin{smallmatrix}a_i&b_i\\b_i&d_i\end{smallmatrix}\right]$ for $i=1,\ldots,n$;
    \item
    $A_i=\left[\begin{smallmatrix}a_i&-b_i\\b_i&a_i\end{smallmatrix}\right]$ for $i=1,\ldots,n$.
\end{enumerate}
As a result, $A_1,\ldots,A_n$ are simultaneously unitarily diagonalizable.
\end{proposition}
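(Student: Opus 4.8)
The plan is to reduce the statement to understanding a single non-scalar matrix among the $A_i$ and then to extract the claimed structure from \eqref{sec4:eq1} by completing a square.

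First I would dispose of the trivial case where every $A_i$ is a scalar matrix, for which conclusion~(1) holds with all $b_i=0$. Otherwise fix $i_0$ with $A:=A_{i_0}$ non-scalar. A non-scalar $2\times2$ matrix has a degree-two minimal polynomial, so its centralizer in $\R^{2\times2}$ is exactly the algebra $\R I_2+\R A$; since each $A_j$ commutes with $A$, I can write $A_j=p_jI_2+q_jA$ with $p_j,q_j\in\R$, and in particular $p_{i_0}=0$, $q_{i_0}=1$, so $\sum_j q_j^2\ge1>0$.

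The key step is to substitute $A_j=p_jI_2+q_jA$ into \eqref{sec4:eq1}. Expanding $\sum_j A_j^{\intercal}A_j=RI_2$ gives $\bigl(\sum_j q_j^2\bigr)A^{\intercal}A+\bigl(\sum_j p_jq_j\bigr)(A+A^{\intercal})=\bigl(R-\sum_j p_j^2\bigr)I_2$. Dividing by $\sum_j q_j^2>0$ and completing the square, this is equivalent to $(A-sI_2)^{\intercal}(A-sI_2)=\lambda I_2$, where $s:=-\bigl(\sum_j p_jq_j\bigr)/\bigl(\sum_j q_j^2\bigr)$ and $\lambda:=s^2+\bigl(R-\sum_j p_j^2\bigr)/\bigl(\sum_j q_j^2\bigr)$. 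The left-hand side is positive semidefinite, so $\lambda\ge0$, and $\lambda=0$ would force $A=sI_2$, contradicting non-scalarity; hence $\lambda>0$ and $O:=\lambda^{-1/2}(A-sI_2)$ is orthogonal, i.e.\ $A=sI_2+\sqrt{\lambda}\,O$ with $O$ orthogonal.

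Finally I would split on $\det O$. If $\det O=-1$, then $O$ is a reflection, hence symmetric, so $A$ is symmetric and therefore so is every $A_j=p_jI_2+q_jA$, which is conclusion~(1). If $\det O=1$, then $O=\left[\begin{smallmatrix}\cos\theta&-\sin\theta\\\sin\theta&\cos\theta\end{smallmatrix}\right]$, so $A=\left[\begin{smallmatrix}a&-b\\b&a\end{smallmatrix}\right]$ with $a=s+\sqrt{\lambda}\cos\theta$ and $b=\sqrt{\lambda}\sin\theta$; the matrices of this shape form the algebra $\{\alpha I_2+\beta J:\alpha,\beta\in\R\}$ with $J:=\left[\begin{smallmatrix}0&-1\\1&0\end{smallmatrix}\right]$, so $A$, and hence every $A_j$, lies in it, which is conclusion~(2). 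The last assertion then follows at once: in case~(1) the $A_j$ are pairwise-commuting real symmetric matrices, hence simultaneously diagonalized by a real orthogonal (a fortiori unitary) matrix; in case~(2) each $A_j=a_jI_2+b_jJ$ is normal since $J$ is orthogonal, and a commuting family of normal matrices is simultaneously unitarily diagonalizable. I expect the one nontrivial point to be spotting the completing-the-square identity $(A-sI_2)^{\intercal}(A-sI_2)=\lambda I_2$ and verifying $\lambda>0$; after that the rotation/reflection dichotomy is forced and everything else is routine — in particular it is harmless that $A$ might formally fit both forms, since fitting at least one of them suffices.
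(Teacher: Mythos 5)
Your argument is correct, and it reaches the conclusion by a genuinely different route from the paper. The paper's proof is a direct coordinate computation: it writes $A_i=\left[\begin{smallmatrix}a_i&c_i\\b_i&d_i\end{smallmatrix}\right]$, extracts the entrywise commutation relations and the three scalar identities coming from $\sum_i A_i^{\intercal}A_i=RI_2$, then parametrizes ($b_i=tc_i$, $a_i=c_is+d_i$) and eliminates, ruling out the mixed case $t\neq\pm1$ (and the case all $c_i=0$ with some $b_i\neq0$) by exhibiting a sum of squares that would have to vanish. You instead exploit two structural facts: that a non-scalar $2\times2$ real matrix $A$ is non-derogatory, so the commuting family lies in the two-dimensional algebra $\R I_2+\R A$; and that, after substituting $A_j=p_jI_2+q_jA$ into \eqref{sec4:eq1} and completing the square, one gets $(A-sI_2)^{\intercal}(A-sI_2)=\lambda I_2$ with $\lambda>0$, so $A=sI_2+\sqrt{\lambda}\,O$ with $O\in O(2)$; the reflection/rotation dichotomy of $O(2)$ then yields exactly cases (1) and (2), and all $A_j$ inherit the form of $A$ because they are affine in $A$. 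Your route is coordinate-free, shorter, and explains conceptually why precisely these two families (symmetric matrices, and the algebra $\R I_2+\R J$ with $J=\left[\begin{smallmatrix}0&-1\\1&0\end{smallmatrix}\right]$) appear, at the cost of invoking the centralizer fact for non-derogatory matrices; the paper's computation is more elementary and self-contained but requires the careful case analysis. You also spell out the final simultaneous unitary diagonalizability (commuting real symmetric matrices in case (1); commuting normal matrices $a_jI_2+b_jJ$ in case (2)), which matches the paper's intended use. I see no gap.
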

\begin{proof}
Let $A_i=\left[\begin{smallmatrix}a_i&c_i\\b_i&d_i\end{smallmatrix}\right]$ for $i=1,\ldots,n$. Since $A_1,\ldots,A_n$ commute pairwise, we have
\begin{subequations}
\begin{align}
   a_ib_j+b_id_j&=a_jb_i+b_jd_i,\label{sec4:eq2a}\\
   a_ic_j+c_id_j&=a_jc_i+c_jd_i,\label{sec4:eq2b}\\
   b_ic_j&=b_jc_i,\label{sec4:eq2c}
\end{align}
\end{subequations}
for all $1\le i<j\le n$. Moreover, the equality \eqref{sec4:eq1} yields
\begin{subequations}
\begin{align}
   \sum_{i=1}^n\left(a_i^2+b_i^2\right)&=R,\label{sec4:eq3a}\\
   \sum_{i=1}^n\left(c_i^2+d_i^2\right)&=R,\label{sec4:eq3b}\\
   \sum_{i=1}^n\left(a_ic_i+b_id_i\right)&=0.\label{sec4:eq3c}
\end{align}
\end{subequations}
First we consider the case that not all $c_i$ are zeros (i.e., $\sum_{i=1}^nc_i^2\ne0$).
By \eqref{sec4:eq2c}, we may assume $b_i=tc_i$ for some $t$ and for $i=1,\ldots,n$. By \eqref{sec4:eq2b}, we may assume $a_i=c_is+d_i$ for some $s$ and for $i=1,\ldots,n$. Then \eqref{sec4:eq3a} and \eqref{sec4:eq3c} become
\begin{subequations}
\begin{align}
    \sum_{i=1}^n\left(c_i^2s^2+2c_id_is+d_i^2+t^2c_i^2\right)&=R,\label{sec4:eq4a}\\
   \sum_{i=1}^n\left(c_i^2s+c_id_i+tc_id_i\right)&=0.\label{sec4:eq4b}
\end{align}
\end{subequations}
\eqref{sec4:eq4a}$-$\eqref{sec4:eq4b}$\times s-$\eqref{sec4:eq3b} yields
\begin{equation}\label{sec4:eq5}
    (1-t)\left(s\sum_{i=1}^nc_id_i-(1+t)\sum_{i=1}^nc_i^2\right)=0.
\end{equation}
It follows from \eqref{sec4:eq5} that one of the following cases holds true: (i) $t=1$, which implies that $A_i$ are all of form in (1); (ii) $t=-1$ and then by \eqref{sec4:eq4b} $s=0$, which implies that $A_i$ are all of form in (2); (iii) $t\ne\pm1$ and
\begin{equation}\label{sec4:eq6}
    s=\frac{(1+t)\sum_{i=1}^nc_i^2}{\sum_{i=1}^nc_id_i}.
\end{equation}
For case (iii), substituting into \eqref{sec4:eq4a} with \eqref{sec4:eq6}, we obtain
\begin{align*}
&\,\sum_{i=1}^n\left(c_i^2s^2+2c_id_is+d_i^2+t^2c_i^2\right)-R\\
=&\,\frac{(1+t)^2\left(\sum_{i=1}^nc_i^2\right)^3}{\left(\sum_{i=1}^nc_id_i\right)^2}+2(1+t)\sum_{i=1}^nc_i^2+\sum_{i=1}^nd_i^2+t^2\sum_{i=1}^nc_i^2-R\\
=&\,\frac{(1+t)^2\left(\sum_{i=1}^nc_i^2\right)^3}{\left(\sum_{i=1}^nc_id_i\right)^2}+(1+t)^2\left(\sum_{i=1}^nc_i^2\right)\\
=&\,(1+t)^2\left(\sum_{i=1}^nc_i^2\right)\left(\frac{\left(\sum_{i=1}^nc_i^2\right)^2}{\left(\sum_{i=1}^nc_id_i\right)^2}+1\right)=0,
\end{align*}
a contradictory.

Now suppose $c_i=0$ for all $i$. Then we must have $b_i=0$ for all $i$ and so $A_i$ fits into (1). Indeed, if not all $b_i$ are zeros, then by \eqref{sec4:eq2a}, we may assume $a_i=b_is+d_i$ for some $s$ and for $i=1,\ldots,n$. \eqref{sec4:eq3b} and \eqref{sec4:eq3c} become
$\sum_{i=1}^nd_i^2=R$ and $\sum_{i=1}^nb_id_i=0$. Thus \eqref{sec4:eq3a} yields
\begin{equation*}
\sum_{i=1}^n\left(b_i^2s^2+2b_id_is+d_i^2+b_i^2\right)-R=(1+s^2)\sum_{i=1}^nb_i^2=0,
\end{equation*}
which implies $\sum_{i=1}^nb_i^2=0$, a contradictory. 
\end{proof}

Now we are ready to prove the promised theorem.
\begin{theorem}\label{sec4:thm2}
Suppose that \eqref{cpop} involves a sphere constraint $|z_1|^2 + \cdots + |z_n|^2=R$ for some $R > 0$, and let $\y$ be an optimal solution of \eqref{RMom}. 
If $\rank\,{\M_r(\y)}=\rank\,{\M_{r-d_K}(\y)}=2$, then global optimality is attained, i.e., $\theta_r=f_{\min}$. Moreover, one can extract either two real solutions or a pair of complex conjugate solutions for \eqref{cpop}.
\end{theorem}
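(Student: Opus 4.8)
The strategy is to run the same flat-extension argument that underlies Corollary~\ref{cor:flat} (namely \cite[Theorem~5.1]{josz2018lasserre}), but to obtain the normality of the relevant shift operators from Proposition~\ref{sec4:thm1} instead of assuming a hyponormality condition. Concretely, once the commuting multiplication operators attached to the flat moment matrix $\M_r(\y)$ are known to be normal, \cite[Theorem~5.1]{josz2018lasserre} (exactly as invoked in the proof of Corollary~\ref{cor:flat}) shows that $\y$ is the truncated moment sequence of a measure $\mu$ supported on at most $\rank\M_r(\y)=2$ points of $K$. Writing $\mu$ as a convex combination of Dirac masses at these feasible points and evaluating $f$ then forces $\theta_r=\eta_r=L_{\y}(f)=f_{\min}$ and shows each atom is a global minimizer; since $\rank\M_r(\y)=2$ there are exactly two distinct ones.

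First I would build the shift operators. Flatness $\rank\M_r(\y)=\rank\M_{r-d_K}(\y)=2$ produces (via the flat-extension machinery recalled in the proof of Corollary~\ref{cor:flat}) the $2$-dimensional quotient space $\B$ spanned by the classes $[\z^{\a}]$ with $|\a|\le r-d_K$, carrying the bilinear form $\langle[\z^{\a}],[\z^{\a'}]\rangle\coloneqq y_{\a,\a'}$; because $\y\subseteq\R$ forces $y_{\a,\a'}=y_{\a',\a}$, this form is \emph{real symmetric}, and it is positive definite on $\B$. Fixing an orthonormal basis of $\B$ thus realizes the well-defined multiplication operators $X_i\colon[\z^{\a}]\mapsto[\z^{\a+e_i}]$ as genuine real $2\times2$ matrices, and the commutativity of polynomial multiplication gives $X_iX_j=X_jX_i$ for all $i,j$.

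Next I would plug the sphere constraint into Proposition~\ref{sec4:thm1}. In the orthonormal basis, $X_i^{\intercal}X_i$ is exactly the principal block of the localizing matrix $\M_{r-1}(|z_i|^2\y)$ indexed by $\{\a:|\a|\le r-d_K\}$, since $\langle X_i[\z^{\a}],X_i[\z^{\a'}]\rangle=y_{\a+e_i,\a'+e_i}=[\M_{r-1}(|z_i|^2\y)]_{\a\a'}$ (this uses $d_K\ge2$). Summing over $i$ and using linearity of the localizing map together with the constraint $\M_{r-1}(h\y)=0$ for $h\coloneqq|z_1|^2+\cdots+|z_n|^2-R$ gives $\sum_i\M_{r-1}(|z_i|^2\y)=R\,\M_{r-1}(\y)$, whose relevant principal block is $R\,I_2$; hence $\sum_{i=1}^{n}X_i^{\intercal}X_i=R\,I_2$. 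Proposition~\ref{sec4:thm1} applied to $A_i\coloneqq X_i$ then yields the dichotomy: either (1) every $X_i$ is symmetric, or (2) every $X_i$ has the form $\left[\begin{smallmatrix}a_i&-b_i\\b_i&a_i\end{smallmatrix}\right]$, and in both cases $X_1,\dots,X_n$ are normal and simultaneously unitarily diagonalizable, so \cite[Theorem~5.1]{josz2018lasserre} applies as above and the two atoms of $\mu$ are the joint eigenvalues of $(X_1,\dots,X_n)$. In case (1) the joint eigenvalues are real, giving two real minimizers. In case (2), identifying $\left[\begin{smallmatrix}a_i&-b_i\\b_i&a_i\end{smallmatrix}\right]$ with $a_i+\i b_i$, they form a conjugate pair $\w,\overline{\w}\in\C^n$ with $\w\ne\overline{\w}$; as $\mu$ is the unique representing measure of the real sequence $\y$ it is conjugate invariant, hence $\mu=\tfrac12\delta_{\w}+\tfrac12\delta_{\overline{\w}}$, so that $\mathrm{Re}(w_i)=y_{e_i,\mathbf{0}}$ and $\mathrm{Re}(w_i\overline{w}_j)=y_{e_i,e_j}$ can be read directly off $\M_1(\y)$, which pins down $\w$ up to the harmless swap $\w\leftrightarrow\overline{\w}$.

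I expect the main obstacle to be making this last step fully rigorous: one must check that in the orthonormal basis the transpose $X_i^{\intercal}$ really implements the adjoint compatibly with multiplication by $\overline{z}_i$, so that $X_i^{\intercal}X_i$ coincides with the claimed block of $\M_{r-1}(|z_i|^2\y)$, and that the single relation encoded in $\M_{r-1}(h\y)=0$ is enough to collapse $\sum_iX_i^{\intercal}X_i$ to $R\,I_2$ as operators on $\B$, not merely at some larger truncation level. These are degree-bookkeeping points — comparing the truncation orders $r-d_K$ and $r-1$ — that go through because of flatness and $d_K\ge2$; once the identity $\sum_iX_i^{\intercal}X_i=R\,I_2$ is in place, Proposition~\ref{sec4:thm1} together with the results already behind Corollary~\ref{cor:flat} do the rest.
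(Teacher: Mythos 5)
Your proposal is correct and follows essentially the same route as the paper: build the commuting real $2\times 2$ shift operators from the rank-two flat moment matrix, derive $\sum_{i}T_i^{\intercal}T_i=R\,I_2$ from the sphere constraint, invoke Proposition~\ref{sec4:thm1} in place of hyponormality to get simultaneous unitary diagonalization, and conclude via the atomic representing measure supported on $K$ (the paper writes out this last step explicitly rather than citing \cite[Theorem~5.1]{josz2018lasserre} wholesale, and checks well-definedness of the shifts via the sphere identity rather than flatness alone). Two cosmetic points: $X_i^{\intercal}X_i$ is a $2\times2$ operator, so it is the Gram matrix of the shifted vectors (not $X_i^{\intercal}X_i$ itself) that equals the stated block of $\M_{r-1}(|z_i|^2\y)$, and the equal weights $\tfrac12,\tfrac12$ follow directly from $p_2=\overline{p}_1$ and $a_{\mathbf{0}}$ real, with no need to invoke uniqueness of the representing measure.
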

\begin{proof}
The proof is an adaptation of the one of \cite[Theorem 5.1]{josz2018lasserre}. 

The real PSD matrix $\M_r(\y)$ of rank $2$ can be factorized in the Grammian form such that $y_{\b, \g} = a_{\b}^{\intercal}a_{\g}$ for all $|\b|,|\g| \leq r$, where $a_{\b}\in\R^2$. Since $\rank\,{\M_{r-1}(\y)}=2$, we have $\R^2=\Span(a_{\a})_{|\a|\le r-1}$. Let us consider the shift operators\footnote{For convenience, we identify the operators with their matrices in the standard basis of $\R^2$.} $T_1 ,\ldots,T_n : \R^2 \to \R^2$, given by 
\begin{equation}
    T_i:\sum_{|\a| \leq r-1} w_{\a} a_{\a}\mapsto \sum_{|\a| \leq r-1} w_{\a} a_{\a+\e_i},\quad i\in[n],
\end{equation}
where $\{\e_1,\ldots,\e_n\}$ is the standard vector basis of $\R^n$.
To ensure that the shifts are well-defined, we have to check that each element of $\R^2$ has a unique image under $T_i$, that is, if $\sum_{|\a| \leq r-1}u_{\a}a_{\a}=\sum_{|\a| \leq r-1}v_{\a}a_{\a}$, then $\sum_{|\a| \leq r-1}u_{\a}a_{\a+\e_i}=\sum_{|\a| \leq r-1}v_{\a}a_{\a+\e_i}$ for all $i\in[n]$. Actually, the sphere constraint yields
\begin{equation*}
    \sum_{i=1}^ny_{\b+\e_i,\g+\e_i}=Ry_{\b,\g}
\end{equation*}
for all $|\b|,|\g|\le r-1$. Thus,
\begin{align*}
\sum_{i=1}^n\left\|\sum_{|\a| \leq r-1}w_{\a}a_{\a+\e_i}\right\|^2&=\sum_{i=1}^n\sum_{|\b|,|\g| \leq r-1}w_{\b}w_{\g}a_{\b+\e_i}a^{\intercal}_{\g+\e_i}\\
&=\sum_{|\b|,|\g| \leq r-1}w_{\b}w_{\g}\sum_{i=1}^ny_{\b+\e_i,\g+\e_i}\\
&=R\sum_{|\b|,|\g| \leq r-1}w_{\b}w_{\g}y_{\b,\g}\\
&=R\left\|\sum_{|\a| \leq r-1}w_{\a}a_{\a}\right\|^2.
\end{align*}
It follows that if $\sum_{|\a| \leq r-1}w_{\a}a_{\a}=0$, then $\sum_{|\a| \leq r-1}w_{\a}a_{\a+\e_i}=0$ for all $i\in[n]$, from which we immediately obtain the desired result.

We now prove that $T_1,\ldots,T_n$ commute pairwise. First note that for any $\a\in\N^n_{r-2}$, we have $T_iT_j(a_{\a})=T_i(a_{\a+\e_j})=a_{\a+\e_i+\e_j}=a_{\a+\e_j+\e_i}=T_jT_i(a_{\a})$. Therefore, for any $u\in\R^2$ with $u=\sum_{|\a| \leq r-2}u_{\a}a_{\a}$ (this is always possible as $\rank\,{\M_{r-2}(\y)}=2$), we have
\begin{align*}
    T_iT_j(u)&=T_iT_j\left(\sum_{|\a| \leq r-1}u_{\a}a_{\a}\right)=\sum_{|\a| \leq r-1}u_{\a}T_iT_j(a_{\a})\\
    &=\sum_{|\a| \leq r-1}u_{\a}T_jT_i(a_{\a})=T_jT_i(u),
\end{align*}
which proves the pairwise commutativity of $T_1,\ldots,T_n$.
Moreover, we have
\begin{align*}
\sum_{i=1}^nu^{\intercal}T_i^{\intercal}T_iu&=\sum_{i=1}^n\sum_{|\b|,|\g|\le r-2}u_{\b}u_{\g}a_{\b+\e_i}^{\intercal}a_{\g+\e_i}\\
&=\sum_{|\b|,|\g|\le r-2}u_{\b}u_{\g}\sum_{i=1}^ny_{\b+\e_i,\g+\e_i}\\
&=R\sum_{|\b|,|\g|\le r-2}u_{\b}u_{\g}y_{\b,\g}\\
&=R\sum_{|\b|,|\g|\le r-2}u_{\b}u_{\g}a_{\b}^{\intercal}a_{\g}=Ru^{\intercal}u,
\end{align*}
which implies
\begin{equation*}
    T_1^{\intercal}T_1+\cdots+T_n^{\intercal}T_n=RI_2.
\end{equation*}
Then by Proposition \ref{sec4:thm1}, $T_1 ,\ldots,T_n$ are simultaneously unitarily diagonalizable. That is, there exists a unitary matrix $P$ such that $T_i = PD_iP^*$, $i=1,\ldots,n$, where $D_i=\diag\,(d_{i1},d_{i2})$ is a diagonal matrix. For convenience, we let $T^{\a}\coloneqq T_1^{\alpha_1}\cdots T_n^{\alpha_n}$. Then for $|\b|,|\g|\le r$, we have
\begin{align*}
y_{\b,\g}&=a_{\b}^{\intercal}a_{\g}=\left(T^{\b}a_{\mathbf{0}}\right)^{\intercal}\left(T^{\g}a_{\mathbf{0}}\right)=a_{\mathbf{0}}^{\intercal}\left(T^{\b}\right)^{\intercal}T^{\g}a_{\mathbf{0}}\\
&=a_{\mathbf{0}}^{\intercal}\left(PD^{\b}P^*\right)^{*}\left(PD^{\g}P^*\right)a_{\mathbf{0}}=a_{\mathbf{0}}^{\intercal}\left(\sum_{j=1}^2p_j\overline{d}_j^{\b}d_j^{\g}p_j^*\right)a_{\mathbf{0}}\\
&=\sum_{j=1}^2a_{\mathbf{0}}^{\intercal}p_jp_j^*a_{\mathbf{0}}\overline{d}_j^{\b}d_j^{\g}=\sum_{j=1}^2|a_{\mathbf{0}}^{\intercal}p_j|^2\overline{d}_j^{\b}d_j^{\g},
\end{align*}
where $p_1,p_2$ denote the columns of $P$ and $d_j\coloneqq(d_{1j},\ldots, d_{nj})$. As a result,
we obtain that the measure $\mu=|a_{\mathbf{0}}^{\intercal}p_1|^2\delta_{d_1}+|a_{\mathbf{0}}^{\intercal}p_2|^2\delta_{d_2}$ satisfies
$y_{\b,\g}=\int_{\C^n}\z^{\b}\overline{\z}^{\g}\,\mathrm{d}\mu$ for all $|\b|,|\g|\le r$.

Finally, we check whether the measure $\mu$ is supported on $K$. Indeed, we have
\begin{align*}
g_i(d_j,\overline{d}_j)&=p_j^*p_j\sum_{\b,\g}g^i_{\b,\g}d_j^{\b}\overline{d}_j^{\g} \quad\quad(\text{since } p_j^*p_j=1)\\
&=\sum_{\b,\g}g^i_{\b,\g}\left(d_j^{\g}p_j\right)^*\left(d_j^{\b}p_j\right)\\
&=\sum_{\b,\g}g^i_{\b,\g}\left(T^{\g}p_j\right)^*\left(T^{\b}p_j\right)\quad\quad(\text{let } p_j=\sum_{|\a|\le r-d_K}p^j_{\a}a_{\a})\\
&=\sum_{|\b'|,|\g'|\le r-d_K}p^j_{\b'}\overline{p}^j_{\g'}\left(\sum_{\b,\g}g^i_{\b,\g}\left(T^{\g}a_{\g'}\right)^{\intercal}\left(T^{\b}a_{\b'}\right)\right)\\
&=\sum_{|\b'|,|\g'|\le r-d_K}p^j_{\b'}\overline{p}^j_{\g'}\left(\sum_{\b,\g}g^i_{\b,\g}a_{\g'+\g}^{\intercal}a_{\b'+\b}\right)\\
&=\sum_{|\b'|,|\g'|\le r-d_K}p^j_{\b'}\overline{p}^j_{\g'}\left(\sum_{\b,\g}g^i_{\b,\g}y_{\b'+\b,\g'+\g}\right)\ge0.
\end{align*}
The last inequality is because $\M_{r-d^g_i}(g_i\y)\succeq0$ and $r-d_K\le r-d^g_i$. Similarly, we can show $h_i(d_j,\overline{d}_j)=0$ by using $\M_{r-d^h_i}(h_i\y)=0$.

By Proposition \ref{sec4:thm1}, either all $T_i$ are symmetric matrices or all $T_i$ are matrices of similarity transformations. In the former case, two real solutions for \eqref{cpop} can be extracted; in the latter case, a pair of complex conjugate solutions for \eqref{cpop} can be extracted.
\end{proof}


The sphere constraint in Theorem \ref{sec4:thm2} can be replaced by the unit-norm constraint on each variable as the latter together with $\rank\,{\M_r(\y)}=\rank\,{\M_{r-2}(\y)}$ can also ensure that $T_1,\ldots,T_n$ are simultaneously unitarily diagonalizable; see \cite[Appendix B]{josz2018lasserre}.
\begin{theorem}\label{sec4:thm3}
Suppose that \eqref{cpop} involves the unit-norm constraint $|z_i|^2=1$ for all $i\in[n]$, and let $\y$ be an optimal solution of \eqref{RMom}. 
If $\rank\,{\M_r(\y)}=\rank\,{\M_{r-d_K}(\y)}=2$, then global optimality is attained, i.e., $\theta_r=f_{\min}$. Moreover, one can extract either two real solutions or a pair of complex conjugate solutions for \eqref{cpop}.
\end{theorem}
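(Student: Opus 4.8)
The plan is to transcribe the proof of Theorem~\ref{sec4:thm2}, letting the $n$ unit-norm constraints $|z_i|^2=1$ play the role that the single sphere constraint played there. First I would note that $d_K\ge 2$, so $\rank\,\M_r(\y)=\rank\,\M_{r-d_K}(\y)=2$ forces $\rank\,\M_t(\y)=2$ for every integer $r-d_K\le t\le r$, in particular for $t=r-1$ and $t=r-2$. Hence the real PSD matrix $\M_r(\y)$ admits a Grammian factorization $y_{\b,\g}=a_{\b}^{\intercal}a_{\g}$ with $a_{\b}\in\R^2$ and $\R^2=\Span(a_{\a})_{|\a|\le r-2}$, and I would define the shift operators $T_1,\dots,T_n:\R^2\to\R^2$ by $T_i:\sum_{|\a|\le r-1}w_{\a}a_{\a}\mapsto\sum_{|\a|\le r-1}w_{\a}a_{\a+\e_i}$, exactly as in Theorem~\ref{sec4:thm2}.

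The only genuinely new ingredient is the well-definedness of the $T_i$ together with the normalization needed to invoke Proposition~\ref{sec4:thm1}. Here I would use that $\M_{r-1}(h_i\y)=0$ for the constraint polynomial $h_i=|z_i|^2-1$, which reads $y_{\b+\e_i,\g+\e_i}=y_{\b,\g}$ for all $|\b|,|\g|\le r-1$. Therefore, writing $u=\sum_{|\a|\le r-1}w_{\a}a_{\a}$,
\begin{equation*}
\left\|\sum_{|\a|\le r-1}w_{\a}a_{\a+\e_i}\right\|^2=\sum_{|\b|,|\g|\le r-1}w_{\b}w_{\g}\,y_{\b+\e_i,\g+\e_i}=\sum_{|\b|,|\g|\le r-1}w_{\b}w_{\g}\,y_{\b,\g}=\|u\|^2 ,
\end{equation*}
which simultaneously shows that $T_i$ is well defined (if $u=0$ then $T_iu=0$) and that $T_i^{\intercal}T_i=I_2$ for every $i$; in particular $T_1^{\intercal}T_1+\cdots+T_n^{\intercal}T_n=nI_2$. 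Pairwise commutativity of $T_1,\dots,T_n$ follows verbatim from the argument in Theorem~\ref{sec4:thm2} (using $\rank\,\M_{r-2}(\y)=2$). Now Proposition~\ref{sec4:thm1}, applied with $R=n>0$, yields that $T_1,\dots,T_n$ are simultaneously unitarily diagonalizable and are \emph{either} all symmetric \emph{or} all of the form $\left[\begin{smallmatrix}a_i&-b_i\\b_i&a_i\end{smallmatrix}\right]$.

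From here the argument is identical to that of Theorem~\ref{sec4:thm2}. Writing $T_i=PD_iP^*$ with $P$ unitary, columns $p_1,p_2$, and $D_i=\diag(d_{i1},d_{i2})$, and setting $d_j=(d_{1j},\dots,d_{nj})$, one checks $y_{\b,\g}=\int_{\C^n}\z^{\b}\overline{\z}^{\g}\,\mathrm{d}\mu$ for all $|\b|,|\g|\le r$ with $\mu=|a_{\mathbf{0}}^{\intercal}p_1|^2\delta_{d_1}+|a_{\mathbf{0}}^{\intercal}p_2|^2\delta_{d_2}$, and then verifies $g_i(d_j,\overline{d}_j)\ge0$ and $h_i(d_j,\overline{d}_j)=0$ using $\M_{r-d^g_i}(g_i\y)\succeq0$, $\M_{r-d^h_i}(h_i\y)=0$ and $r-d_K\le r-d^g_i,\,r-d^h_i$. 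Thus $\mu$ is a probability measure supported on $K$; since $d^f\le r$ we obtain $f_{\min}\le\int f\,\mathrm{d}\mu=L_{\y}(f)=\eta_r=\theta_r\le f_{\min}$ (the final inequality because \eqref{CMom} is a relaxation of \eqref{cpop}, and the middle equalities by optimality of $\y$ and Theorem~\ref{thm3}), so $\theta_r=f_{\min}$. Because $\rank\,\M_r(\y)=2$, both atoms must carry positive mass and be distinct, so $d_1,d_2$ are two global minimizers; by the dichotomy from Proposition~\ref{sec4:thm1} the $T_i$ are either all symmetric, giving $d_1,d_2\in\R^n$ (two real solutions), or all of the rotation form, giving $d_2=\overline{d_1}$ with $d_1\notin\R^n$ (a pair of complex conjugate solutions).

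I do not anticipate a serious obstacle: the proof is essentially a transcription of Theorem~\ref{sec4:thm2}. The one point that needs genuine care is checking that the unit-norm constraints supply exactly what the sphere supplied there, namely well-definedness of the shifts and the identity $\sum_iT_i^{\intercal}T_i=nI_2$ (so that Proposition~\ref{sec4:thm1} still applies, now with $R=n$), and confirming that $\rank\,\M_r(\y)=2$ forces the extracted representing measure to have two distinct atoms of positive mass, so that both $d_1$ and $d_2$ are bona fide global minimizers.
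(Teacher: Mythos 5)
Your proof is correct and takes essentially the approach the paper intends: the paper states Theorem~\ref{sec4:thm3} without a separate proof, remarking only that the unit-norm constraints, like the sphere, force the shift operators to be simultaneously unitarily diagonalizable, and your transcription of the proof of Theorem~\ref{sec4:thm2} — using $y_{\b+\e_i,\g+\e_i}=y_{\b,\g}$ to get $T_i^{\intercal}T_i=I_2$, hence $\sum_{i=1}^n T_i^{\intercal}T_i=nI_2$, and invoking Proposition~\ref{sec4:thm1} with $R=n$ — fills in exactly that remark. Equivalently, one could simply sum the $n$ localizing identities to recover the sphere identity with $R=n$ and then repeat the sphere-case argument verbatim, so your per-variable isometry step is a slightly stronger but fully compatible version of the same idea.
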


\begin{remark}
The condition $\rank\,{\M_r(\y)}=\rank\,{\M_{r-d_K}(\y)}=2$ in Theorems \ref{sec4:thm2} and \ref{sec4:thm3} can be weakened as: there exists an integer $t$ such that $\max\,\{d_K, d_{\min}\} \leq t \leq r$ such that $\rank\,\M_t(\y) = \rank\,\M_{t-d_K}(\y)=2$.
\end{remark}

Theorems \ref{sec4:thm2} and \ref{sec4:thm3} guarantee that \eqref{cpop} has either two real solutions or a pair of complex conjugate solutions. For the latter case, we provide a simple procedure for extracting a pair of conjugate optimal solutions (denoted by $\z,\overline{\z}$) from the first-order moment matrix $\M_{1}(\y)$. First of all, note that as all $T_i$ are matrices of similarity transformations, we can take the unitary matrix $P=\left[\begin{smallmatrix}\frac{1}{\sqrt{2}}&\frac{1}{\sqrt{2}}\\\frac{\i}{\sqrt{2}}&-\frac{\i}{\sqrt{2}}\end{smallmatrix}\right]$ such that $T_i = PD_iP^*$, $i=1,\ldots,n$. Let $a_{\mathbf{0}}=[a^1_{\mathbf{0}},a^2_{\mathbf{0}}]^{\intercal}$ and we have $a_{\mathbf{0}}^{\intercal}a_{\mathbf{0}}=y_{\mathbf{0},\mathbf{0}}=1$. 
Then $\left|a_{\mathbf{0}}^{\intercal}p_1\right|^2=\left|\frac{1}{\sqrt{2}}a^1_{\mathbf{0}}+\frac{\i}{\sqrt{2}}a^2_{\mathbf{0}}\right|^2=\frac{1}{2}a_{\mathbf{0}}^{\intercal}a_{\mathbf{0}}=\frac{1}{2}$. Similarly, $\left|a_{\mathbf{0}}^{\intercal}p_2\right|^2=\frac{1}{2}$. Therefore, $\mu=\frac{1}{2}\delta_{\z}+\frac{1}{2}\delta_{\overline{\z}}$.
Let $[\z]_1=[1,z_1,\ldots,z_n]^{\intercal}$. 
We then have $\M_{1}(\y)=\frac{1}{2}[\z]_1[\z]_1^{*}+\frac{1}{2}[\overline{\z}]_1[\overline{\z}]_1^{*}=\RR([\z]_1[\z]_1^{*})$ ($\RR$ means taking the real part). Let $\z=(\rho_i(\cos(\phi_i)+\sin(\phi_i)))_{i\in[n]}$. Because
\begin{align*}
    [\z]_1[\z]_1^{*}=\begin{bmatrix}
    1&\overline{z}_1&\cdots&\overline{z}_n\\
    z_1&|z_1|^2&\cdots&z_1\overline{z}_n\\
    \vdots&\vdots&\ddots&\vdots\\
    z_n&z_n\overline{z}_1&\cdots&|z_n|^2\\
    \end{bmatrix},
\end{align*}
we obtain
\begin{align*}
    \M_{1}(\y)&=\begin{bmatrix}
    1&\rho_1\cos(\phi_1)&\cdots&\rho_n\cos(\phi_n)\\
    \rho_1\cos(\phi_1)&\rho_1^2&\cdots&\rho_1 \rho_n\cos(\phi_1-\phi_n)\\
    \vdots&\vdots&\ddots&\vdots\\
    \rho_n\cos(\phi_n)&\rho_1 \rho_n\cos(\phi_1-\phi_n)&\cdots&\rho_n^2\\
    \end{bmatrix}\\
    &=\begin{bmatrix}
    1&0\\
    \rho_1\cos(\phi_1)& \rho_1\sin(\phi_1)\\
    \vdots&\vdots\\
    \rho_n\cos(\phi_n)&\rho_n\sin(\phi_n)\\
    \end{bmatrix}\cdot\begin{bmatrix}
    1& \rho_1\cos(\phi_1)&\cdots&\rho_n\cos(\phi_n)\\
    0&\rho_1\sin(\phi_1)&\cdots&\rho_n\sin(\phi_n)\\
    \end{bmatrix}.\\
\end{align*}
Thus, we can retrieve the pair of conjugate optimal solutions from the above ``Cholesky decomposition'' of $\M_{1}(\y)$.

\begin{example}
Let 
\begin{align*}
f(\z,\overline{\z})=&\,\,0.5z_1\overline{z}_2 + 0.5z_1\overline{z}_3 + 0.5z_2\overline{z}_1 + 0.25|z_2|^2 + 0.25z_2\overline{z}_3 + 0.5z_3\overline{z}_1 \\
&\,+ 0.25z_3\overline{z}_2+ z_1 + z_2 + z_3 + \overline{z}_1 + \overline{z}_2 + \overline{z}_3,
\end{align*}
and consider the CPOP: 
\begin{equation}\label{ex}
\inf_{\z\in\C^3} \left\{f(\z,\overline{\z})\,\,\,\hbox{\rm{s.t.}}\,\,\,|z_i|^2=1,\,i=1,2,3\right\}.
\end{equation}
The first-order moment-HSOS relaxation yields the value $-3.75$ and
\begin{equation*}
\M_1(\y)=\begin{bmatrix}
1     &  -0.250013 & -0.875003&  -0.875003\\
 -0.250013 &  1     &  -0.249981 & -0.249981\\
 -0.875003  &-0.249981 &  1  &      1\\
 -0.875003  &-0.249981  & 1 &       1
\end{bmatrix}.
\end{equation*}
We have
\begin{equation*}
\M_1(\y)=\begin{bmatrix}
1     &  0\\
 -0.250013 & 0.968242\\
 -0.875003  &-0.484117\\
 -0.875003  &-0.484117
\end{bmatrix}\cdot
\begin{bmatrix}
1&-0.250013&-0.875003&-0.875003\\
0& 0.968242&-0.484117&-0.484117
\end{bmatrix}.
\end{equation*}
Let
\begin{equation*}
  \z=(-0.250013+0.968242\i,-0.875003-0.484117\i,-0.875003-0.484117\i).
\end{equation*}
One can check that $\z$ is feasible to \eqref{ex} and $f(\z,\overline{\z})=-3.75$. Therefore, $\z$ and $\overline{\z}$ are a pair of conjugate optimal solutions to \eqref{ex}. Note that here we successfully extract two solutions even without the validity of $\rank\,{\M_r(\y)}=\rank\,{\M_{r-d_K}(\y)}$.
\end{example}

\section{Numerical experiments}\label{expe}
The real moment-HSOS hierarchy has been implemented in the Julia package {\tt TSSOS}\footnote{{\tt TSSOS} is freely available at \href{https://github.com/wangjie212/TSSOS}{https://github.com/wangjie212/TSSOS}.}. In this section, we compare the performance of the real moment-HSOS hierarchy (R-HSOS), the complex moment-HSOS hierarchy (C-HSOS), and the real moment-SOS hierarchy (R-SOS) on CPOPs with real coefficients using {\tt TSSOS} where {\tt Mosek} 10.0 \cite{mosek} is employed as an SDP solver with default settings. For R-SOS, we have exploited sign symmetry to reduce the SDP size \cite{tssos1}. In presenting the results, the column labelled by `opt' records optima of SDPs and the column labelled by `time' records running time in seconds. Moreover, the symbol `-' means that {\tt Mosek} runs out of memory.

\hspace{0.5em}

$\bullet$ {\bf Minimizing a random quadratic polynomial over the unit sphere.}
Our first example is to minimize a random quadratic polynomial over the unit sphere:
\begin{equation}\label{random:eq1}
\begin{cases}
\inf\limits_{\z\in\C^{n}} &[\z]_1^*Q[\z]_1\\
\,\,\,\rm{s.t.}&|z_1|^2+\cdots+|z_n|^2=1,
\end{cases}
\end{equation}
where $[\z]_1=(\z^{\a})_{|\a|\le 1}$ and $Q$ is a random real symmetric matrix whose entries are selected with respect to the uniform probability distribution on $[-1,1]$.

We approach \eqref{random:eq1} for $n=50,100,\ldots,300$ with the first-order relaxation of the three hierarchies. To measure the quality of obtained lower bounds, we also compute an upper bound (indicated by `ub' in the table) for \eqref{random:eq1} using the nonlinear programming solver {\tt Ipopt}.
The related results are shown in Table \ref{random:tab1}, from which we see that R-HSOS is the fastest and is more scalable with the size of the problem. 
Interestingly, we can also see that the lower bounds produced by the SDP relaxations coincide with the corresponding upper bounds, and thus are actually globally optimal.

\begin{table}[htbp]
	\caption{Minimizing a random quadratic polynomial over the unit sphere ($r=1$).}\label{random:tab1}
	\renewcommand\arraystretch{1.2}
	\centering
	\begin{tabular}{c|c|c|c|c|c|c|c}
		\multirow{2}{*}{$n$}&\multirow{2}{*}{ub}&\multicolumn{2}{c|}{R-HSOS}&\multicolumn{2}{c|}{C-HSOS}&\multicolumn{2}{c}{R-SOS}\\
		\cline{3-8}
		&&opt&time&opt&time&opt&time\\
		\hline
		50&-4.2477&-4.2477&0.12&-4.2477&0.79&-4.2477&0.36\\
		\hline
		100&-6.3120&-6.3120&2.90&-6.3120&17.3&-6.3120&8.52\\
		\hline
		150&-7.0808&-7.0808&21.2&-7.0808&134&-7.0808&66.1\\
		\hline
		200&-8.5434&-8.5434&104&-8.5434&731&-8.5434&324\\
		\hline
		250&-8.7356&-8.7356&421&-&-&-8.7356&1082\\
		\hline
		300&-9.7941&-9.7941&1163&-&-&-&-\\
	\end{tabular}
\end{table}

\hspace{0.5em}
$\bullet$ {\bf Minimizing a random quartic polynomial over the unit sphere.} Our second example is to minimize a quartic polynomial over the unit sphere:
\begin{equation}\label{random:eq2}
\begin{cases}
\inf\limits_{\z\in\C^{n}} &[\z]_2^*Q[\z]_2\\
\,\,\,\rm{s.t.}&|z_1|^2+\cdots+|z_n|^2=1,
\end{cases}
\end{equation}
where $[\z]_2=(\z^{\a})_{|\a|\le 2}$ and $Q$ is a random real symmetric matrix whose entries are selected with respect to the uniform probability distribution on $[-1,1]$.

We approach \eqref{random:eq2} for $n=8,10,\ldots,22$ with the second-order relaxations of the three hierarchies. To measure the quality of obtained lower bounds, we also compute an upper bound (indicated by `ub' in the table) for \eqref{random:eq2} using {\tt Ipopt}.
The related results are shown in Table \ref{random:tab2}, from which we see that R-HSOS is the fastest and is more scalable with the size of the problem.
Besides, we can see that the lower bounds produced by R-SOS coincide with the corresponding upper bounds, and thus are globally optimal, while the lower bounds produced by R-HSOS and C-HSOS are not globally optimal.

\begin{table}[htbp]
	\caption{Minimizing a random quartic polynomial over the unit sphere ($r=2$).}\label{random:tab2}
	\renewcommand\arraystretch{1.2}
	\centering
	\begin{tabular}{c|c|c|c|c|c|c|c}
		\multirow{2}{*}{$n$}&\multirow{2}{*}{ub}&\multicolumn{2}{c|}{R-HSOS}&\multicolumn{2}{c|}{C-HSOS}&\multicolumn{2}{c}{R-SOS}\\
		\cline{3-8}
		&&opt&time&opt&time&opt&time\\
		\hline
		8&-2.4039&-3.5594&0.10&-3.5594&0.51&-2.4039&0.78\\
		\hline
		10&-2.8058&-4.3101&0.43&-4.3101&3.10&-2.8058&4.21\\
		\hline
		12&-2.8804&-5.0127&2.00&-5.0127&11.1&-2.8804&19.7\\
		\hline
		14&-3.2747&-5.7066&6.59&-5.7066&50.9&-3.2747&92.4\\
		\hline
		16&-3.6565&-6.3434&27.0&-6.3434&180&-3.6565&342\\
		\hline
		18&-4.2988&-7.3378&91.3&-7.3378&596&-4.2988&1216\\
		\hline
		20&-4.1186&-7.4586&269&-7.4586&1844&-&-\\
            \hline
		22&-4.5440&-8.3169&717&-&-&-&-\\ 
	\end{tabular}
\end{table}

\hspace{0.5em}

$\bullet$ {\bf Smale’s Mean Value conjecture.} The next example is adapted from Smale’s Mean Value conjecture. 
Let us begin with a result of Smale \cite{smale1981fundamental}.
\begin{theorem}[\cite{smale1981fundamental}]\label{thm:smale}
Let $P\in\C[\z]$ be a polynomial of degree $n+1\ge2$ and $a\in\C$ such that $P'(a)\ne0$. Then there exists a critical point $b$ of $P$ such
that
\begin{equation}\label{eq:smale}
    \left|\frac{P(a)-P(b)}{a-b}\right|\le 4|P'(a)|.
\end{equation}
\end{theorem}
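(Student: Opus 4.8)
The plan is to reduce to a normalized case by an affine change of variables and then apply the Koebe one-quarter theorem to a suitable branch of the inverse function $P^{-1}$. First I would replace $P(w)$ by $\tilde P(w):=P(w+a)-P(a)$, so that $\tilde P(0)=0$, $\tilde P'(0)=P'(a)\neq0$, $\deg\tilde P=n+1$, and the critical points of $\tilde P$ are exactly $b-a$ as $b$ runs over the critical points of $P$, with $\tilde P(b-a)/(b-a)=\bigl(P(b)-P(a)\bigr)/(b-a)$. Hence it suffices to prove: if $P$ has degree $n+1\ge2$ with $P(0)=0$ and $P'(0)\neq0$, then some critical point $b$ of $P$ satisfies $|P(b)/b|\le4\,|P'(0)|$. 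Since $\deg P'=n\ge1$, there is at least one critical point; if some critical point $b$ has $P(b)=0$ the inequality is trivial (then $b\neq0$ and the left side is $0$), so I may assume every critical value of $P$ is nonzero.

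Next I would let $b_0$ be a critical point of $P$ whose critical value has smallest modulus, and set $m:=|P(b_0)|>0$, so that the open disk $D(0,m)$ contains no critical value of $P$. Because $P'(0)\neq0$, there is a holomorphic branch $\psi$ of $P^{-1}$ near $0$ with $\psi(0)=0$ and $\psi'(0)=1/P'(0)$. Since $D(0,m)$ is simply connected and disjoint from the finite set of critical values of $P$, the monodromy theorem lets me continue $\psi$ to a single-valued holomorphic map $\psi\colon D(0,m)\to\C$ with $P\circ\psi=\mathrm{id}$; such a map is automatically univalent, since $\psi(w_1)=\psi(w_2)$ forces $w_1=P(\psi(w_1))=P(\psi(w_2))=w_2$.

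Rescaling $\psi$ to the unit disk and invoking the Koebe one-quarter theorem would then give
\[
\psi\bigl(D(0,m)\bigr)\ \supseteq\ D\!\left(0,\ \tfrac{m\,|\psi'(0)|}{4}\right)\ =\ D\!\left(0,\ \tfrac{m}{4\,|P'(0)|}\right).
\]
On the other hand $b_0\notin\psi(D(0,m))$: if $b_0=\psi(w_0)$ with $|w_0|<m$, then $|P(b_0)|=|w_0|<m$, contradicting $|P(b_0)|=m$. As $0\in\psi(D(0,m))$ while $b_0\notin\psi(D(0,m))$, the displayed inclusion forces $|b_0|\ge m/(4\,|P'(0)|)$, whence $|P(b_0)/b_0|=m/|b_0|\le4\,|P'(0)|$; undoing the normalization of the first step would finish the proof.

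The step I expect to require the most care is the extension of $\psi$: one must verify that the branch continues \emph{univalently} to the whole disk $D(0,m)$—this is precisely the covering-space/monodromy argument over $\C$ minus the critical values—and dispose of the degenerate configurations ($P(b_0)=0$, or a critical value lying on the circle $|w|=m$, which is harmless since only the open disk is used). It is also worth noting that a purely elementary attempt—taking $b_0$ to be the critical point of smallest modulus and estimating $P(b_0)=P'(0)\,b_0\int_0^1\prod_j\bigl(1-t b_0/b_j\bigr)\,dt$ factorwise (one factor equals $1-t$, the others are bounded by $2$ in modulus)—only yields the weaker constant $2^{n-2}$, so the complex-analytic input is exactly what pins down the absolute constant $4$.
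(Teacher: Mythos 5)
Your proof is correct, and since the paper states this result only by citation to Smale's 1981 article without reproducing an argument, there is no in-paper proof to diverge from: your normalization, the univalent branch of $P^{-1}$ on the largest critical-value-free disk $D(0,m)$, and the Koebe one-quarter theorem constitute exactly the classical argument behind the constant $4$. The only step worth tightening in a write-up is the continuation of the branch: rather than invoking monodromy informally, note that $P$ is proper, so it restricts to a finite covering map over $\C$ minus its critical values, and over the simply connected disk $D(0,m)$ the component of the preimage containing $0$ maps biholomorphically onto $D(0,m)$, giving the univalent section $\psi$ directly.
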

Theorem \ref{thm:smale} was proved by Smale in 1981, and he further asked whether one can replace the factor $4$ in \eqref{eq:smale} by 1, or even possibly by $\frac{n}{n+1}$, which is known as Smale’s Mean Value conjecture and also appears in \cite{smale1998mathematical}. This conjecture is verified for $n=1,2,3$ \cite[Chapter 7.2]{rahman2002analytic} and has the following equivalent (normalized) form.

\begin{conjecture}\label{conj:smale}
Let $P\in\C[\z]$ be a polynomial of degree $n+1\ge2$ such that $P(0)=0$ and $P'(0)=1$, and let $b_1, \ldots, b_{n}$ be its critical points. Then,
\begin{equation}\label{eq2:smale}
    \min\,\left\{\left|\frac{P(b_i)}{b_i}\right|:i=1,\ldots,n\right\}\le \frac{n}{n+1}.
\end{equation}
\end{conjecture}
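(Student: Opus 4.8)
Conjecture~\ref{conj:smale} is a famous open problem --- it is known only for $n\le 3$ --- so an unconditional proof is out of reach; what the tools of this paper can actually offer is, for each \emph{fixed} degree $n+1$, a computer-assisted verification obtained by phrasing the conjecture as an instance of \eqref{cpop} with real coefficients and running the real moment-HSOS hierarchy on it. The plan is therefore: (i) turn the conjecture into a polynomial optimization problem; (ii) use the rotation/scaling invariance of the problem to normalize it so that the feasible set is compact and carries a \emph{sphere} constraint; (iii) solve \eqref{RMom} at increasing orders and read off the value and the dual certificate.

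For step (i): a normalized $P$ is determined, up to its leading coefficient (which $P(0)=0$ and $P'(0)=1$ fix), by its critical points $\bb=(b_1,\dots,b_n)$: one has $P'(z)=e_n(\bb)^{-1}\sum_{k=0}^n(-1)^k e_{n-k}(\bb)z^k$ with $e_k$ the elementary symmetric polynomials, and integrating yields $P(b_j)/b_j=e_n(\bb)^{-1}N_j(\bb)$ with $N_j(\bb)=\sum_{k=0}^n\frac{(-1)^k}{k+1}e_{n-k}(\bb)b_j^k$. Both $N_j$ and $e_n$ have rational coefficients, so, clearing the denominator, the condition $|P(b_j)/b_j|^2\ge t^2$ becomes the self-conjugate polynomial inequality $|N_j(\bb)|^2\ge t^2|e_n(\bb)|^2$ in $\bb,\overline{\bb}$ with real coefficients. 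The non-polynomial ``$\min_j$'' in \eqref{eq2:smale} is removed either by bisecting on $t$ and testing infeasibility of $\{\,|N_j(\bb)|^2\ge t^2|e_n(\bb)|^2,\ j\in[n]\,\}$ at $t=n/(n+1)$, or by passing to the geometric mean and establishing the (cleaner, a priori stronger) single inequality $\prod_{j=1}^n|N_j(\bb)|^2\le\bigl(\tfrac{n}{n+1}\bigr)^{2n}|e_n(\bb)|^{2n}$.

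For step (ii): $|P(b_j)/b_j|$ is invariant under $P(z)\mapsto e^{-\i\theta}c\,P(e^{\i\theta}z/c)$, which rotates and rescales the $b_j$ jointly; with the scaling one may impose $|b_1|^2+\cdots+|b_n|^2=1$. This makes the feasible set compact, so D'Angelo--Putinar gives asymptotic convergence, and --- the point of contact with this paper --- the sphere constraint is exactly the hypothesis of Theorem~\ref{sec4:thm2}, so that if the rank-two condition is met at some order, exactness is certified and a conjugate pair of extremal polynomials can be extracted. One caveat: the locus $e_n(\bb)=\prod_i b_i=0$ lies in the closure of the sphere but not among genuine instances (those force $b_i\ne0$); there $|P(b_j)/b_j|\to\tfrac12\le\tfrac{n}{n+1}$, so it does not raise the extremal value, but making the model ignore it cleanly --- for instance by adjoining $|e_n(\bb)|^2\ge\epsilon$ and disposing of the degenerate regime by the trivial $\tfrac12$ bound --- takes a little care.

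The principal obstacle is that this route cannot settle the conjecture in general. There is no a priori relaxation order at which the hierarchy is exact; here $N_j$ and $e_n$ have degree $n$, so $d_K\ge n$ and already the lowest relaxation carries PSD blocks of size $\ge\binom{2n}{n}$, so only modest degrees are computationally reachable; and even there, a fully rigorous proof additionally requires rounding the floating-point SDP output to an exact rational sum-of-squares certificate --- standard in principle but delicate. For unbounded $n$ the conjecture remains open.
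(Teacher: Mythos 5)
What you propose is, in substance, exactly what the paper itself does with this statement. Conjecture~\ref{conj:smale} is labeled a conjecture precisely because it is open (settled only for $n\le 3$), and the paper offers no proof: it merely encodes the conjecture as the CPOP \eqref{smale:eq1}, adjoins a sphere constraint to get \eqref{smale:eq2}, and runs R-HSOS/C-HSOS/R-SOS for $n=2,\dots,5$, numerically recovering the conjectured value $\frac{n}{n+1}$ up to $n=4$ (Table~\ref{smale:tab}); so your framing --- fixed-degree, computer-assisted verification via the hierarchy, with no unconditional proof in reach --- is the right reading of what can be done. The encodings differ in detail. The paper keeps the normalization $P$ monic with $P'(0)=p(0)=1$, which fixes $z_1\cdots z_n=\frac{(-1)^n}{n+1}$, expresses the minimum through an auxiliary variable $u$ with constraints $|H(z_i)|\ge|u|$, and then adds the sphere $|z_1|^2+\cdots+|z_n|^2=n\left(\frac{1}{n+1}\right)^{2/n}$; note that, given the fixed product, AM--GM shows this sphere forces all $|z_i|$ to be equal, so it is a genuine restriction of the feasible set (satisfied by the conjectured extremal $z^{n+1}+z$) rather than a normalization --- the mirror image of the caveat you raise about the $e_n(\bb)=0$ locus in your formulation. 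Your version instead scales the critical points onto the unit sphere, clears the $e_n$ denominator, and removes the auxiliary variable by bisection or a product inequality; this is a legitimate alternative, and more candid about what is or is not lost, at the price of higher-degree constraint data ($d_K\ge n$) and the degenerate locus you must excise. Your remark that a rigorous verification would further require rounding the floating-point SDP output to an exact certificate goes beyond the paper, which reports only numerical agreement; both you and the paper agree that no finite computation of this kind can settle the conjecture for all $n$.
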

The upper bound $\frac{n}{n+1}$ in Conjecture \ref{conj:smale} is attained by the polynomial $P(z)=z^{n+1}+z$.

We can reformulate Conjecture \ref{conj:smale} as the following CPOP by introducing a new variable $u$:
\begin{equation}\label{smale:eq1}
\begin{cases}
\sup\limits_{(\z,u)\in\C^{n+1}} &|u|\\
\,\,\quad\rm{s.t.}&|H(z_i)|\ge|u|,\quad i=1,\ldots,n,\\
&z_1\cdots z_n=\frac{(-1)^n}{n+1},\\
\end{cases}
\end{equation}
where $H(y)\coloneqq\frac{1}{y}\int_{0}^{y}p(z)\,\mathrm{d}z$ and $p(z)\coloneqq(n+1)(z-z_1)\cdots(z-z_n)$ with $p(0)=1$.
The optimum of \eqref{smale:eq1} is conjectured to be $\frac{n}{n+1}$. To ensure global convergence of the moment-HSOS hierarchies, we add a sphere constraint to \eqref{smale:eq1}:

\begin{equation}\label{smale:eq2}
\begin{cases}
\sup\limits_{(\z,u)\in\C^{n+1}} &|u|\\
\,\,\quad\rm{s.t.}&|H(z_i)|\ge|u|,\quad i=1,\ldots,n,\\
&z_1\cdots z_n=\frac{(-1)^n}{n+1},\\
&|z_1|^2+|z_2|^2+\cdots+|z_n|^2=n\left(\frac{1}{n+1}\right)^{\frac{2}{n}}.
\end{cases}
\end{equation}

We approach \eqref{smale:eq2} for $n=2,3,4,5$ with R-HSOS, C-HSOS, and R-SOS. To measure the quality of obtained upper bounds, we also compute an lower bound (indicated by `lb' in the table) for \eqref{smale:eq2} using {\tt Ipopt}. The related results are shown in Table \ref{smale:tab}. For $n=2$, the $2$nd order relaxation (numerically) retrieves the desired value $\frac{2}{3}$; for $n=3$, the $5$th order relaxation (numerically) retrieves the desired value $\frac{3}{4}$; for $n=4$, the $9$th order relaxation (numerically) retrieves the desired value $\frac{4}{5}$; for $n=5$, the hierarchies are not able to retrieve the desired value $\frac{5}{6}$ due to memory limitation. It can be seen from the table that R-HSOS is more scalable with the size of the problem. Note that for $n\ge4$, the local solver fails in returning lower bounds.

\begin{table}[htbp]
	\caption{Results for Smale’s Mean Value conjecture.}\label{smale:tab}
	\renewcommand\arraystretch{1.2}
	\centering
     \begin{tabular}{c|c|c|c|c|c|c|c|c}
		\multirow{2}{*}{$n$}&\multirow{2}{*}{$r$}&\multirow{2}{*}{lb}&\multicolumn{2}{c|}{R-HSOS}&\multicolumn{2}{c|}{C-HSOS}&\multicolumn{2}{c}{R-SOS}\\
		\cline{4-9}
		&&&opt&time&opt&time&opt&time\\
		\hline
		2&2&0.66666&0.66666&0.005&0.66666&0.006&0.66666&0.009\\
		\hline
		3&5&0.75000&0.75000&0.15&0.75000&0.45&0.75000&36.6\\
		\hline
		4&9&$*$&0.80000&6257&-&-&-&-\\
		\hline
		5&7&$*$&1.20615&16497&-&-&-&-\\
	\end{tabular}
\end{table}

\hspace{0.5em}

$\bullet$ {\bf The Mordell inequality conjecture.} Our final example is the Mordell inequality conjecture. In 1958, Birch proposed the following conjecture: Suppose that the complex numbers $z_1,\ldots,z_n\in\C$ satisfies $|z_1|^2+\cdots+|z_n|^2=n$. Then the maximum of $\Delta\coloneqq\prod_{1\le i<j\le n}|z_i-z_j|^2$ is $n^n$. In 1960, Mordell proved affirmatively the conjecture for $n=3$ \cite{mordell1960discriminant}. In the same year, J. H. H. Chalk gave a counterexample in \cite{chalk1960note} which disproves the conjecture for $n\ge6$. The case of $n=4$ was affirmatively solved later in 2011 \cite{lin}. So the only open case is when $n=5$. We reformulate the Mordell inequality conjecture as the following CPOP:
\begin{equation}\label{mordelle:eq}
\begin{cases}
\sup\limits_{\z\in\C^n} &\prod_{1\le i<j\le n}|z_i-z_j|^2\\
\,\,\rm{s.t.}&|z_1|^2+\cdots+|z_n|^2=n,\\
\end{cases}
\end{equation}
whose optimum is conjectured to be $n^n$ ($n\le5$).

The maximum of $\Delta$ is achieved only if $z\coloneqq\frac{1}{n}(z_1+\cdots+z_n)=0$. To see this, note
\begin{equation}\label{mordelle:eq1}
    \sum_{i=1}^n|z_i-z|^2=\sum_{i=1}^n|z_i|^2-n|z|^2.
\end{equation}
If $z\ne0$, then by substituting $y_i=\lambda(z_i-z)$ for $z_i$ with $\lambda=\sqrt{\frac{\sum_{i=1}^n|z_i|^2}{\sum_{i=1}^n|z_i-z|^2}}$, it holds $|y_1|^2+\cdots+|y_n|^2=n$ and the value of $\Delta$ gets larger. Therefore we can eliminate $z_n$ in \eqref{mordelle:eq} using $z_1+\cdots+z_n=0$ without changing its optimum:
\begin{equation}\label{mordelle:eq2}
\begin{cases}
\sup\limits_{\z\in\C^{n-1}} &\prod_{1\le i<j\le n-1}|z_i-z_j|^2\prod_{i=1}^{n-1}|z_i+z_1+\ldots+z_{n-1}|^2\\
\,\,\,\,\,\rm{s.t.}&|z_1|^2+\cdots+|z_{n-1}|^2+|z_1+\ldots+z_{n-1}|^2=n.\\
\end{cases}
\end{equation}

We approach \eqref{mordelle:eq2} with R-HSOS and C-HSOS. The related results are shown in Table \ref{mordelle:tab1} ($n=3$) and Table \ref{mordelle:tab2} ($n=4$), from which we see that the upper bounds provided by the two hierarchies get close to the desired value $27$ or $256$ as the relaxation order $r$ grows while R-HSOS is more efficient than C-HSOS. In addition, the local solver and R-SOS with minimum relaxation order return $27$ and $256$ for $n=3,4$, respectively.

\begin{table}[htbp]
	\caption{Results for the Mordell inequality conjecture with $n=3$.}\label{mordelle:tab1}
	\renewcommand\arraystretch{1.2}
	\centering
	\begin{tabular}{c|c|c|c|c}
		\multirow{2}{*}{$r$}&\multicolumn{2}{c|}{R-HSOS}&\multicolumn{2}{c}{C-HSOS}\\
		\cline{2-5}
	     &opt&time&opt&time\\
		\hline
		8&27.658&0.02&27.658&0.04\\
		\hline
		10&27.348&0.03&27.348&0.06\\
		\hline
		12&27.228&0.05&27.228&0.11\\
		\hline
		14&27.144&0.11&27.144&0.26\\
		\hline
		16&27.104&0.18&27.104&0.49\\
		\hline
		18&27.080&0.19&27.079&0.81\\
		\hline
		20&27.074&0.28&27.064&1.07\\
		\hline
		22&27.059&0.44&27.055&1.35\\
	\end{tabular}
\end{table}

\begin{table}[htbp]
	\caption{Results for the Mordell inequality conjecture with $n=4$.}\label{mordelle:tab2}
	\renewcommand\arraystretch{1.2}
	\centering
	\begin{tabular}{c|c|c|c|c}
		\multirow{2}{*}{$r$}&\multicolumn{2}{c|}{R-HSOS}&\multicolumn{2}{c}{C-HSOS}\\
		\cline{2-5}
	     &opt&time&opt&time\\
		\hline
		8&497.37&1.38&497.37&5.41\\
		\hline
		10&343.67&8.93&343.67&52.5\\
		\hline
		12&326.85&44.9&326.85&279\\
		\hline
		14&292.89&215&292.87&643\\
		\hline
		16&277.65&792&277.55&1979\\
            \hline
		18&274.10&1685&274.16&7048\\
            \hline
		20&269.07&4245&-&-\\
	\end{tabular}
\end{table}


\section{Application in polyphase code design}\label{app}
In this section, we provide an application of the proposed real moment-HSOS hierarchy to polyphase code design, which is a fundamental problem in signal processing \cite{deng2004polyphase}. A polyphase code set consists of $n$ signals represented by $n$ complex numbers $z_1,\ldots,z_n$ of unit norm. Given a polyphase code set $z_1,\ldots,z_n$, the related aperiodic autocorrelation functions are defined by
\begin{equation}
    A(j)=\sum_{i=1}^{n-j}z_i\overline{z}_{i+j},\quad j=1,\ldots,n-2.
\end{equation}
In polyphase code design, the goal is to minimize the norms of aperiodic autocorrelation functions $A(j),j=1,\ldots,n-2$ (called sidelobes). If we take the total autocorrelation sidelobe energy as the cost function, then the polyphase code design problem can be formulized as the following CPOP:
\begin{equation}\label{psd:eq1}
\begin{cases}
\inf\limits_{\z\in\C^n} &\sum_{j=1}^{n-2}\left|\sum_{i=1}^{n-j}z_i\overline{z}_{i+j}\right|^2\\
\,\,\,\rm{s.t.}&|z_i|^2=1,\quad i=1,\ldots,n.
\end{cases}
\end{equation}

We approach \eqref{psd:eq1} for $n=4,5,\ldots,12$ with the 5th order relaxations of the three hierarchies. The related results are shown in Table \ref{psd:tab1} where the upper bounds computed by {\tt Ipopt} are also displayed for comparison. We make the following observations: (1) for $n=4,5,6$, R-HSOS, C-HSOS, and R-SOS provide the same lower bound matching the corresponding upper bound; (2) for $n\ge7$, R-HSOS and C-HSOS provide the same lower bound matching the corresponding upper bound up to $n=10$ while R-SOS runs out of memory; (3) R-HSOS is the fastest.

\begin{table}[htbp]
	\caption{Results for the polyphase code design problem \eqref{psd:eq1} ($r=5$).}\label{psd:tab1}
	\renewcommand\arraystretch{1.2}
	\centering
	\begin{tabular}{c|c|c|c|c|c|c|c}
		\multirow{2}{*}{$n$}&\multirow{2}{*}{ub}&\multicolumn{2}{c|}{R-HSOS}&\multicolumn{2}{c|}{C-HSOS}&\multicolumn{2}{c}{R-SOS}\\
            \cline{3-8}
		&&opt&time&opt&time&opt&time\\
		\hline
		4&0.5000&0.5000&0.01&0.5000&0.02&0.5000&2.04\\
		\hline
		5&1.0000&1.0000&0.02&1.0000&0.04&1.0000&22.6\\
		\hline
		6&4.0000&4.0000&0.06&4.0000&0.18&4.0000&497\\
		\hline
		7&1.1418&1.1418&0.21&1.1418&0.38&-&-\\
		\hline
	    8&1.7428&1.7428&0.57&1.7428&1.94&-&-\\
		\hline
		9&0.0594&0.0594&2.74&0.0594&8.95&-&-\\
		\hline
		10&3.4932&3.4932&13.4&3.4932&68.7&-&-\\
		\hline
		11&2.2579&2.2096&80.4&2.2096&401&-&-\\
		\hline
		12&5.7178&-0.7769&350&-&-&-&-\\
	\end{tabular}
\end{table}

Alternatively, if we take the autocorrelation sidelobe peak as the cost function, then the polyphase code design problem can be formulized as the following optimization problem:
\begin{equation}\label{psd:eq2}
\begin{cases}
\inf\limits_{\z\in\C^n} &\max\,\left\{\left|\sum_{i=1}^{n-j}z_i\overline{z}_{i+j}\right|:j=1,\ldots,n-2\right\}\\
\,\,\,\rm{s.t.}&|z_i|^2=1,\quad i=1,\ldots,n.
\end{cases}
\end{equation}
By introducing an auxiliary variable $u$, we can reformulate \eqref{psd:eq2} as the following CPOP:
\begin{equation}\label{psd:eq3}
\begin{cases}
\inf\limits_{(\z,u)\in\C^{n+1}} &|u|\\
\,\,\,\quad\rm{s.t.}&\left|\sum_{i=1}^{n-j}z_i\overline{z}_{i+j}\right|\le|u|,\quad j=1,\ldots,n-2,\\
&|z_i|^2=1,\quad i=1,\ldots,n.
\end{cases}
\end{equation}

We approach \eqref{psd:eq3} for $n=4,5,\ldots,15$ with the 4th order relaxations of the three hierarchies. The related results are shown in Table \ref{psd:tab2} where the upper bounds computed by {\tt Ipopt} are also displayed for comparison. We make the following observations: (1) for $n=4,5,6,7$, R-HSOS, C-HSOS, and R-SOS provide the same lower bound matching the corresponding upper bound; (2) for $n\ge8$, R-HSOS and C-HSOS provide the same lower bound matching the corresponding upper bound up to $n=9$ while R-SOS runs out of memory; (3) R-HSOS is the fastest.

\begin{table}[htbp]
	\caption{Results for the polyphase code design problem \eqref{psd:eq3} ($r=4$).}\label{psd:tab2}
	\renewcommand\arraystretch{1.2}
	\centering
	\begin{tabular}{c|c|c|c|c|c|c|c}
		\multirow{2}{*}{$n$}&\multirow{2}{*}{ub}&\multicolumn{2}{c|}{R-HSOS}&\multicolumn{2}{c|}{C-HSOS}&\multicolumn{2}{c}{R-SOS}\\
            \cline{3-8}
		&&opt&time&opt&time&opt&time\\
		\hline
		4&0.5000&0.5000&0.01&0.5000&0.02&0.5000&1.43\\
		\hline
		5&0.7703&0.7703&0.02&0.7703&0.04&0.7703&10.5\\
		\hline
		6&1.0000&1.0000&0.05&1.0000&0.12&1.0000&73.2\\
		\hline
		7&0.5219&0.5219&0.14&0.5219&0.33&0.5219&732\\
		\hline
	    8&0.6483&0.6483&0.37&0.6483&0.77&-&-\\
		\hline
		9&0.1119&0.1119&1.03&0.1119&2.67&-&-\\
		\hline
		10&0.8248&0.5805&2.30&0.5805&7.21&-&-\\ 
		\hline
		11&0.6671&0.4943&6.37&0.4943&24.2&-&-\\ 
		\hline
		12&0.8971&0.4928&19.3&0.4928&87.8&-&-\\ 
		\hline
		13&0.7154&0.4189&105&0.4189&440&-&-\\ 
		\hline
		14&0.8146&0.3309&324&0.3309&1548&-&-\\ 
		\hline
		15&0.9232&0.3098&1096&0.3098&5546&-&-\\ 
	\end{tabular}
\end{table}

\section{Conclusions}
In this paper, we have presented a real moment-HSOS hierarchy for CPOPs with real coefficients, which offers a new item in the arsenal of structure-exploiting techniques for moment-HSOS hierarchies of CPOPs. One possible topic for future research is developing a structured moment-HSOS hierarchy of CPOPs for which the polynomial data are invariant under conjugation of subsets of variables.
Besides, inspired by the complexity comparison presented in Table \ref{tab:comp}, we are wondering whether it is advantageous to solve certain real POPs via the complex moment-HSOS hierarchy after converting them to equivalent complex POPs? We give an illustrative example below, and leave a thorough investigation to future research.

\begin{example}
Consider the following real POP:
\begin{equation}\label{ex:rpop}
\begin{cases}
\inf\limits_{x_1,x_2,x_3,x_4\in\R}& 3-x_1^2-x_3^2+x_1x_2^2+2x_2x_3x_4-x_1x_4^2\\
\,\,\,\,\,\,\quad\rm{s.t.}&x_2\ge0,\\
&x_1^2+3x_3^2-2=0,\\
&x_4=0,\\
&x_1^2+x_2^2+x_3^2+x_4^2-3=0.
\end{cases}
\end{equation}
The second-order (moment-SOS) relaxation gives a lower bound $-0.414213$ which is globally optimal as the moment matrix is of rank one.

Note that \eqref{ex:rpop} can be equivalently expressed as the following CPOP by letting $z_1=x_1+x_3\i,z_2=x_2+x_4\i$:
\begin{equation}
\begin{cases}
\inf\limits_{z_1,z_2\in\C}& 3-|z_1|^2+\frac{1}{2}z_1\overline{z}_2^2+\frac{1}{2}z_2^2\overline{z}_1\\
\,\,\,\,\,\rm{s.t.}&z_2+\overline{z}_2\ge0,\\
&|z_1|^2-\frac{1}{4}z_1^2-\frac{1}{4}\overline{z}_1^2-1=0,\\
&z_2^2+\overline{z}_2^2-2|z_2|^2=0,\\
&|z_1|^2+|z_2|^2-3=0.
\end{cases}
\end{equation}
The second-order (real moment-HSOS) relaxation gives a lower bound $-0.909535$, and the third-order relaxation achieves the global optimum $-0.414213$. Despite the real moment-HSOS hierarchy needs a higher order relaxation to attain global optimality, we note that the third-order complex moment matrix is of size $10$ while the second-order real moment matrix is of size $15$, which means for this specific problem, the complex reformulation is more preferable from the point of view of computational complexity.
\end{example}

\bibliographystyle{siamplain}
\bibliography{refer}
\end{document}